\documentclass[microtype]{gtpart}     % Basic GT/GTM/AGT style
%\documentclass[a4paper,twoside]{article}

%%%%%%%%%%  DO NOT FORGET TO CHANGE DATE %%%%%%%%%

%\usepackage[active]{srcltx}
%%%%%%%%%%  DO NOT FORGET TO CHANGE DATE %%%%%%%%%

\usepackage{amsmath,amsthm,amsfonts,latexsym,amscd,amssymb,enumerate}
\usepackage[all]{xy}
\usepackage[mathscr]{eucal}

\usepackage{amsrefs}

\setcounter{secnumdepth}{2}  %do not number subsubsections
\setcounter{tocdepth}{3}

\swapnumbers
\theoremstyle{plain}
\newtheorem{theorem}{Theorem}[section]
\newtheorem{lemma}[theorem]{Lemma}
\newtheorem{corollary}[theorem]{Corollary}
\newtheorem{proposition}[theorem]{Proposition}

\theoremstyle{definition}
\newtheorem{definition}[theorem]{Definition}

\theoremstyle{remark}
\newtheorem{remark}[theorem]{Remark}

\newcommand{\bB}{\mathbb{B}}

\newcommand{\bK}{\mathbb{K}}

\newcommand{\bN}{\mathbb{N}}

\newcommand{\bZ}{\mathbb{Z}}

\newcommand{\cB}{\mathcal{B}}

\newcommand{\cH}{\mathcal{H}}

\newcommand{\cK}{\mathcal{K}}

\newcommand{\cQ}{\mathcal{Q}}

\newcommand{\cU}{\mathcal{U}}
\newcommand{\cV}{\mathcal{V}}
\newcommand{\cW}{\mathcal{W}}
\newcommand{\cX}{\mathcal{X}}

\newcommand{\rmK}{\mathrm{K}}

\newcommand{\sH}{\mathscr{H}}

\newcommand{\sK}{\mathscr{K}}

\newcommand{\Nu}{\mathcal{V}}

\newcommand{\complexs}{\mathbb{C}}

\newcommand{\integers}{\mathbb{Z}}

\DeclareMathOperator{\id}{id}
\newcommand{\boundary}[1]{\partial#1}

 %absolute value
%\newcommand{\bigabs}[1]{\left\lvert#1\right\rvert} %absolute value

\newcommand{\tensor}{\otimes}
\newcommand{\into}{\hookrightarrow}
\newcommand{\onto}{\twoheadrightarrow}
\newcommand{\iso}{\cong}
\newcommand{\disjointunion}{\amalg}

   %support
  %closure
      %image
    %Volume
  %Diameter
  %Distance
    %order
    %Endomorphisms
    %Homomorphisms

\DeclareMathOperator{\pr}{pr}
%\DeclareMathOperator{\res}{res}  %Restriction
  %Restriction
% \DeclareMathOperator{\Sp}{Sp}    %Spur % schon definiert???
%\DeclareMathOperator{\fl}{f\/l}  %flip

\DeclareMathOperator{\ind}{ind}
\DeclareMathOperator{\sgn}{Sgn}
  % Ricci curvature
  % Scalar curvature

\DeclareMathOperator{\Ind}{Ind}

\newcommand{\forget}[1]{}

\newcommand{\innerprod}[1]{\langle #1 \rangle}

{\catcode`@=11\global\let\c@equation=\c@theorem}

% Hier werden Gleichungen und Theoreme zusammen gezaehlt. Soll ein anderer Zaehler statt theorem verwendet werden (entspr. dem \newtheorem-Befehl), muss 2-mal theorem durch diesen Zaehler ersetzt werden. (Die Zeilen entsprechen der Zaehlung von \newtheorem{equation}[theorem]).

%\renewcommand{\labelenumi}{(\arabic{enumi})}

% Diese neue Variante schreibt jetzt auch arabische Ziffern in
% Klammern beim zitieren!

% \usepackage[autostyle=true]{csquotes}

% \usepackage[shortlabels]{enumitem}
% \newlist{myenumi}{enumerate}{1}
% \setlist[myenumi,1]{label=\upshape(\roman*)}
% \newlist{myenuma}{enumerate}{1}
% \setlist[myenuma,1]{label=\upshape(\alph*)}
% \newlist{myenumext}{enumerate}{1}
% \setlist[myenumext,1]{label=\upshape(\Roman*)}
% \setlist[enumerate]{label=\upshape(\arabic*)}

%adding counter for comments
\newcounter{commentcounter}

%%%%%%%%%%Macro for comments in the margin
\usepackage{ifthen,srcltx,color}
\newcommand{\showcomments}{yes}
%\renewcommand{\showcomments}{no}

% added by martin; does not change layout but allows space for margin notes, hence suppressing overfull hbox messages
\setlength{\marginparwidth}{3.1cm}

\newsavebox{\commentbox}
\newenvironment{com}%
% begin comment
{\ifthenelse{\equal{\showcomments}{yes}}%
% then begin comment in margin
{\raisebox{0.7mm}{\small\cref{com:\arabic{commentcounter}}}
        \begin{lrbox}{\commentbox}
        \begin{minipage}[t]{1.2in}\raggedright\sffamily\tiny
        \raisebox{0.7mm}{\thecommentcounter}\label{com:\arabic{commentcounter}}}
% else eat contents of the environment
{\begin{lrbox}{\commentbox}}}
% end comment
{\ifthenelse{\equal{\showcomments}{yes}}
% then end comment
{\end{minipage}\end{lrbox}\marginpar{\usebox{\commentbox}}}
% else finish eating
{\end{lrbox}}}

\usepackage[capitalise]{cleveref}
\crefname{setup}{Setup}{Setups}
\crefname{lemma}{Lemma}{Lemmas}
\crefname{diagram}{Diagram}{Diagram}
\crefname{commentcounter}{}{}
\crefdefaultlabelformat{#2\textup{#1}#3}

\usepackage{xparse}

% ### temporarily used by martin for comments and debugging
%\specialcomment{content-moved-away}{\bgroup\color{lightgray}}{\egroup}
%\specialcomment{content-removed}{\bgroup\color{red}}{\egroup}
%\newenvironment{content-moved-here}{\color{olive}}{}
%\newenvironment{content-inserted}{\color{blue}}{}
%\excludecomment{content-moved-away}
%\excludecomment{content-removed}
%\renewenvironment{content-moved-here}{}{}
%\renewenvironment{content-inserted}{}{}

%\newcommand{\comts}[1]{\todo[inline,color=red]{\textbf{TS}: #1}{\,}}

%\allowdisplaybreaks[2]

\title{The Gromov-Lawson codimension $2$ obstruction to positive scalar
  curvature and the $C^*$-index}

\author{ Yosuke Kubota}
\givenname{Yosuke}
\surname{Kubota}
\address{Department of Mathematical Sciences \\ Shinshu University \\
Japan }
\email{ykubota@shinshu-u.ac.jp}
%\urladdr{http://ithems-members.riken.jp/kubota}

\author{Thomas Schick}
\givenname{Thomas}
\surname{Schick}
\address{Mathematisches Institut\\
Universit\"at G{\"o}ttingen\\
Germany}
\email{thomas.schick@math.uni-goettingen.de}
\urladdr{http://www.uni-math.gwdg.de/schick}

\keyword{higher index theory}
\keyword{positive scalar curvature}
\keyword{higher signature}
\keyword{$C^*$-index theory}

%\subject{primary}{msc2010}{}

\arxivreference{1909.09584}

\begin{document}
\pagestyle{myheadings}
\markboth{Yosuke Kubota, Thomas Schick}{On the Gromov-Lawson codimension $2$
  obstruction to positive scalar curvature}

\begin{abstract}
Gromov and Lawson developed a codimension $2$ index obstruction to positive
  scalar curvature for a closed spin manifold $M$, later refined by Hanke, Pape and Schick. Kubota has shown that also this obstruction can be obtained from the Rosenberg index of the ambient manifold $M$ which takes values in the K-theory of the maximal $C^*$-algebra of the fundamental group of $M$, using relative index constructions. 
  
In this note, we give a slightly simplified account of Kubota's work and
remark that it also applies to the signature operator, thus recovering the
homotopy invariance of higher signatures of codimension $2$ submanifolds of
Higson, Schick, Xie.

%We also show that the same works with reduced instead of maximal group $C^*$-algebras. This is based on a calculation of $C^*$-closures due to Buss, Echterhoff, Siebenand, and Willett, presented in an appendix.
\end{abstract}

\maketitle

\section{Introduction}

Let $M$ be a closed spin manifold and $N\subset M$ a submanifold of
codimension $2$ with trivial normal bundle. Much current research is devoted
to the question when such a manifold $M$ does admit a Riemannian metric of
positive scalar curvature.
The submanifold $N$ can provide an obstruction to this, as was first explored
in rather special situations by Gromov and Lawson \cite{GromovLawson}. The
core of the argument lead to the following version \cite{HankePapeSchick}:
\begin{theorem}\label{theo:cd2o}
  Let $M$ be a closed manifold, $N\subset M$ a connected submanifold
  codimension $2$ 
  with trivial normal bundle. Assume that the induced map $\pi_1(N)\to
  \pi_1(M)$ is injective and $\pi_2(N)\to \pi_2(M)$ is surjective.

  If $M$ is spin and the Rosenberg index of $N$ doesn't vanish, $0\ne \alpha(N)\in
  \rmK_*(C^*_\mathrm{max}\pi_1(N))$, then $M$ does not admit a Riemannian metric with
  positive scalar curvature.
\end{theorem}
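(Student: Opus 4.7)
The plan is to reduce the theorem to the non-vanishing of the Rosenberg index $\alpha(M) \in \rmK_*(C^*_{\max}\pi_1(M))$ of the ambient manifold, since by the classical Rosenberg obstruction for spin manifolds the latter is already incompatible with a metric of positive scalar curvature. Write $\Gamma := \pi_1(M)$ and $\Lambda := \pi_1(N)$, so that the injectivity hypothesis gives a subgroup inclusion $\Lambda \subset \Gamma$. I would aim to construct a codimension 2 transfer
\[
t\colon \rmK_*(C^*_{\max}\Gamma)\to \rmK_{*-2}(C^*_{\max}\Lambda)
\]
with the property that $t(\alpha(M)) = \alpha(N)$; then $\alpha(N)\ne 0$ immediately forces $\alpha(M)\ne 0$.

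The transfer $t$ would be realised as Kasparov product with a class $[\tau]\in KK_2(C^*_{\max}\Gamma,C^*_{\max}\Lambda)$ built from three ingredients: (i) the Bott element along the normal $\bR^2$-direction, available because the normal bundle of $N$ in $M$ is trivialised; (ii) excision on a tubular neighborhood $U\cong N\times\bR^2$ of $N$ in $M$; and (iii) the Green--Julg type induction/restriction class associated to the subgroup inclusion $\Lambda\subset\Gamma$. This last ingredient is where the $\pi_1$-injectivity is essential, since it is what makes the induction/restriction between $C^*_{\max}\Gamma$ and $C^*_{\max}\Lambda$ well behaved.

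The heart of the argument, and the step I expect to be hardest, is the multiplicative index identity
\[
\alpha(M)\otimes_{C^*_{\max}\Gamma} [\tau] = \alpha(N) \in \rmK_{*-2}(C^*_{\max}\Lambda).
\]
Unpacking the Rosenberg index as the index of the Mishchenko--Fomenko twisted Dirac operator $D_M\otimes \cL_M$, and performing the Kasparov product geometrically, one finds that the product decouples along the trivialised $\bR^2$-direction: a Callias-style or external-product Dirac argument on the normal fibre collapses to a Bott periodicity shift, leaving the index of $D_N$ twisted by the restriction of $\cL_M$ to $N$, viewed as a Mishchenko--Fomenko bundle over $C^*_{\max}\Lambda$ through the inclusion $\Lambda\subset\Gamma$. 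The $\pi_2$-surjectivity hypothesis enters precisely at this point, to identify this restricted--descended bundle with the genuine Mishchenko--Fomenko bundle $\cL_N$ of $N$: the potential discrepancy between the classifying maps $N\to B\Lambda$ and $N\to B\Gamma$ is controlled by a class in $H^2$ coming from $\pi_2$, which vanishes under the surjectivity assumption.

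Given this identity, the conclusion is immediate: the non-vanishing of $\alpha(N)$ propagates to $\alpha(M)$, and Rosenberg's theorem rules out positive scalar curvature on the spin manifold $M$. The main technical obstacle is the KK-theoretic bookkeeping behind the multiplicative index step, particularly keeping the Mishchenko--Fomenko coefficients in line under the Kasparov product and making precise use of the $\pi_2$-surjectivity. A more geometric alternative, in the spirit of Gromov--Lawson and Hanke--Pape--Schick, would be to replace the Kasparov argument by a Callias-type index theorem on $M\setminus N$, using a section of the trivialised normal bundle that vanishes exactly along $N$; however, the KK-theoretic formulation seems the cleanest way to obtain the transfer identity.
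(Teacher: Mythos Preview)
Your overall strategy --- construct a transfer $t\colon \rmK_*(C^*_{\max}\Gamma)\to \rmK_{*-2}(C^*_{\max}\Lambda)$ sending $\alpha(M)$ to $\alpha(N)$ --- is exactly what the paper does (this is Theorem~\ref{theo:Kubota}). But your ingredient (iii) hides the entire difficulty. There is no general ``restriction'' class in $KK(C^*_{\max}\Gamma,C^*_{\max}\Lambda)$ for a subgroup inclusion $\Lambda\subset\Gamma$; induction goes the wrong way, and Green--Julg does not furnish such a wrong-way map. Relatedly, your account of where $\pi_2$-surjectivity enters is not right: since $\Lambda\hookrightarrow\Gamma$, the restriction of $\cL_M$ to $N$ is automatically the induction of $\cL_N$ up to $C^*_{\max}\Gamma$, with no $\pi_2$ needed --- but this lands you in $\rmK_*(C^*_{\max}\Gamma)$, not $\rmK_*(C^*_{\max}\Lambda)$, which is precisely the problem.

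The paper's resolution is quite different from a Bott/excision/restriction package. Setting $\Pi=\Lambda\times\integers$, one constructs by hand a unitary representation $\rho\colon\Gamma\to\cU(\cQ_{C^*\Pi})$ into the \emph{Calkin algebra} with $C^*\Pi$-coefficients (Theorem~\ref{theo:Kubota_expl}); the induced map $C^*_{\max}\Gamma\to\cQ_{C^*\Pi}$ composed with the boundary $\delta_\cQ\colon\rmK_*(\cQ_{C^*\Pi})\xrightarrow{\cong}\rmK_{*-1}(C^*\Pi)$ and the K\"unneth projection $\rmK_{*-1}(C^*\Pi)\to\rmK_{*-2}(C^*\Lambda)$ gives the transfer. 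The $\pi_2$-surjectivity is used not to compare Mishchenko bundles, but (via \cite{HankePapeSchick}) to produce a retraction $\pi_1(W_\infty)\to\Pi$ for $W_\infty$ the complement of a lifted tubular neighbourhood in the $\Lambda$-cover of $M$; this retraction is what makes the normal generator of $\ker(G\twoheadrightarrow\Gamma)$ act compactly, so that $\rho$ descends to $\Gamma$ after passing to the Calkin quotient. Your proposal does not locate this mechanism, and without it ingredient (iii) does not exist.
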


The proof uses index theory of the Dirac operator, but not on $M$ itself. It
is a common theme that $\alpha(M)$ should be the most powerful index theoretic
obstruction to positive scalar curvature on $M$, formulated as a conjecture in
\cite{SchickICM}. However, for the codimension $2$ obstruction this remained
elusive. Some partial results in this direction have been carried out in
\cite{Engel} and \cite{NitscheSchickZeidler}, but they were of topological
nature and required the validity of the strong Novikov conjecture for
$\pi_1(M)$ to answer the question.

Higson, Schick, and Xie proved in \cite{HigsonSchickXie} a companion result:
the unexpected homotopy invariance of higher signatures of submanifolds of
codimension $2$ in the situation of Theorem \ref{theo:cd2o}, with the spin
condition replaced by orientability.

Recently, in \cite[Section 3.3]{Kubota} the first author proved the desired result about the Rosenberg index of $M$:
\begin{theorem}\label{theo:Kubota}
  In the situation of Theorem \ref{theo:cd2o}, the Rosenberg index of $M$
  itself is non-zero: $0\ne \alpha(M)\in \rmK_*(C^*_{\mathrm{max}}\pi_1(M))$.

More precisely, the situation determines a homomorphism (induced from a KK-element)
  \begin{equation*}
    \rho_{M,N} \colon \rmK_*(C^*_\mathrm{max}\pi_1(M))\to \rmK_{*-2}(C^*_\mathrm{max}\pi_1(N)),
  \end{equation*}
  mapping the Rosenberg index of $M$ to the one of $N$.
\end{theorem}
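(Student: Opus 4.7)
The plan is to construct $\rho_{M,N}$ as a KK-theoretic Gysin (wrong-way) map attached to the codimension $2$ embedding $N\hookrightarrow M$, and then to verify by naturality of the Dirac operator that it sends $\alpha(M)$ to $\alpha(N)$. I would work throughout at the level of the universal cover $\tilde M$ with its $\pi_1(M)$-action, descending to the group $C^*$-algebras only at the end.

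The first step is to translate the hypotheses into an equivariant geometric picture. Injectivity of $\pi_1(N)\to\pi_1(M)$ implies, by a direct covering-space argument using the simple connectedness of $\tilde M$, that every connected component of the preimage of $N$ in $\tilde M$ is simply connected, hence a copy of $\tilde N$; the full preimage is thus the homogeneous bundle $\tilde N':=\pi_1(M)\times_{\pi_1(N)}\tilde N$. Combined with surjectivity of $\pi_2(N)\to\pi_2(M)$, the trivialization of the normal bundle on $N$ lifts to a $\pi_1(M)$-equivariant open tubular embedding $\tilde N'\times\mathbb{R}^2\into\tilde M$, with trivial action on the $\mathbb{R}^2$-factor.

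The second step is to assemble $\rho_{M,N}$ from three ingredients. The tubular embedding above produces, via extension by zero, an equivariant KK-class relating $C_0(\tilde M)$ and $C_0(\tilde N')\otimes C_0(\mathbb{R}^2)$. Bott periodicity on the $\mathbb{R}^2$-factor then supplies the degree shift by $2$. Finally, Kasparov descent for the maximal crossed product with $\pi_1(M)$ together with the Green imprimitivity equivalence
\[
  C_0(\tilde N')\rtimes_\mathrm{max}\pi_1(M)\;\sim_\mathrm{Mor}\;C_0(\tilde N)\rtimes_\mathrm{max}\pi_1(N)
\]
turns this equivariant data into a KK-class between crossed-product algebras, from which $\rho_{M,N}\in\mathrm{KK}_2(C^*_\mathrm{max}\pi_1(M),C^*_\mathrm{max}\pi_1(N))$ is extracted after identifying the two group $C^*$-algebras as corners via the classifying-map (Mishchenko) machinery.

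The final identification $\rho_{M,N}(\alpha(M))=\alpha(N)$ is then a multiplicativity statement for the Kasparov product. On the tubular neighborhood the spin Dirac operator of $M$ decomposes as the graded exterior tensor product of the spin Dirac operator of $N$ with the translation-invariant Dirac operator on the normal $\mathbb{R}^2$, the latter representing the Bott generator, and simultaneously Green imprimitivity matches the Mishchenko-Fomenko line bundle of $M$ restricted to $\tilde N'$ with that of $N$. The main obstacle, I expect, is not any single geometric or topological step, but rather the coherent bookkeeping required to thread Bott periodicity, maximal descent, and Green imprimitivity together at the level of the \emph{maximal} group $C^*$-algebras, where many of the usual formulae demand extra care. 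Granting the construction, $\alpha(N)\neq 0$ immediately forces $\alpha(M)\neq 0$, proving the theorem.
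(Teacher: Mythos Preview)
Your proposal has a genuine gap at the crucial step. The equivariant Gysin construction you describe works perfectly well and yields, after descent and the Morita equivalences $C_0(\tilde M)\rtimes_{\max}\Gamma\sim C(M)$ and $C_0(\tilde N')\rtimes_{\max}\Gamma\sim C_0(\tilde N)\rtimes_{\max}\pi\sim C(N)$, an element of $\mathrm{KK}_2(C(M),C(N))$, i.e.\ the ordinary K-homology Gysin map $\rmK_*(M)\to\rmK_{*-2}(N)$. What it does \emph{not} produce is an element of $\mathrm{KK}_2(C^*_{\max}\Gamma,C^*_{\max}\pi)$. The sentence ``extracted after identifying the two group $C^*$-algebras as corners via the classifying-map (Mishchenko) machinery'' hides precisely the difficulty: there is no map $C^*_{\max}\Gamma\to C(M)$ (the assembly map goes the other way), so you cannot simply precompose. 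Wrong-way maps of this type were constructed at the level of classifying spaces in \cite{Engel} and \cite{NitscheSchickZeidler}, but those papers need the strong Novikov conjecture for $\Gamma$ to draw any conclusion about $\alpha(M)$; producing the map directly between the group $C^*$-algebras was exactly the open problem that \cite{Kubota} solved.

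Relatedly, you misplace the role of the $\pi_2$ hypothesis. The equivariant tubular embedding $\tilde N'\times\bR^2\hookrightarrow\tilde M$ is automatic from the trivial normal bundle and needs no $\pi_2$ input; it is just the preimage of $N\times D^2$ under the covering. In the paper's proof the surjectivity of $\pi_2(N)\to\pi_2(M)$ enters at a completely different point: via \cite[Theorem~4.3]{HankePapeSchick} it guarantees a retraction $r\colon\pi_1(W_\infty)\to\Pi$, which is what allows one to build an explicit unitary representation $\rho\colon\Gamma\to\cU(\cQ_{C^*\Pi})$ into the Calkin algebra with $C^*\Pi$ coefficients. This representation induces a genuine $*$-homomorphism $C^*_{\max}\Gamma\to\cQ_{C^*\Pi}$, and $\rho_{M,N}$ is then the composite $\beta\circ\delta_{\cQ}\circ\rho_*$ with the boundary map of the Calkin extension and the K\"unneth map. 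This Calkin-algebra trick is the new idea and has no counterpart in your outline.
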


The proof uses a relative higher index setup, using more or less the interplay of $\pi_1(N\times S^1)\subset\pi_1(W)$ for the manifold $W:=M\setminus U(N)$ with boundary constructed from $M$ by removing a tubular neighborhood $U(N)\iso N\times D^2$ of $N$.

In this note, we give an account of the proof in \cite{Kubota}, together with a slight
simplification avoiding relative higher index theory. Moreover, we observe that this method also gives a simpler proof of the main result of \cite{HigsonSchickXie}, even of the following strengthening. Here we write $\sgn(M;\Nu_M)$ for the $C^*$-algebraic higher signatures, i.e.\  the Mishchenko-Fomenko index of the signature operator twisted with the Hilbert $C^*$-module bundle $\Nu_M :=\tilde{M} \times _{\pi_1(M)}C^*_{\mathrm{max}}\pi_1(N)$.

\begin{theorem}\label{theo:codim2sig}
  Let $M$ with submanifold $N$ be as in Theorem \ref{theo:cd2o}. If $M$ is oriented then the homomorphism $\rho_{M,N}$ of Theorem \ref{theo:Kubota} maps the $C^*$-algebraic higher signature of $M$ to twice the one of $N$. More generally, let $f\colon M_1\to M$ be a map between closed oriented manifolds of degree $1$. Assume $N$ is transversal to $f$ and set $N_1:=f^{-1}(N)\subset M_1$. Then
 \begin{equation}\label{eq:inddiff}
  \rho_{M,N}(\sgn(M_1; f^*\Nu_M)) = 2 \sgn(N_1; f^*\Nu_N) \in \rmK_*(C^*_\mathrm{max}\pi_1(N)).
 \end{equation}
In particular, if $f$ is a homotopy equivalence then $f^*\Nu_M=\Nu_{M_1}$ and by homotopy invariance of the $C^*$-algebraic signature $\sgn(M_1;\Nu_{M_1}) = \sgn(M,\Nu_M)$ we also get
   \begin{equation*}
2(\sgn(N_1;f^*\Nu_{N})-\sgn(N;\Nu_N)) =0,
\end{equation*}
recovering the main theorem of \cite{HigsonSchickXie}.
\end{theorem}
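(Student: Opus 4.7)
The plan is to recapitulate Kubota's construction of $\rho_{M,N}$ from the proof of Theorem~\ref{theo:Kubota}, but with the signature operator in place of the spin Dirac operator, tracking the extra factor of $2$ that arises from the normal $S^1$-direction. Recall that $\rho_{M,N}$ is a KK-class built from the codimension-$2$ pair: setting $W := M\setminus U(N)$ with $\partial W = N\times S^1$, one represents $\rho_{M,N}$ as a composition of a boundary-type map in $K$-theory (associated with the inclusion $C_0(W\setminus\partial W)\subset C_0(W)$, twisted by $\Nu_M$) with a projection onto the Bott summand coming from the circle factor of $\partial W$.

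The first step is to transport the setup along $f$. By transversality, $W_1 := f^{-1}(W)$ is a manifold with boundary $N_1\times S^1$, and $f|_{W_1}$ is a degree-$1$ map of pairs $(W_1,\partial W_1)\to(W,\partial W)$ which on the boundary restricts to $f|_{N_1}\times\id_{S^1}$. The Mishchenko--Fomenko signature $\sgn(M_1;f^*\Nu_M)$ is the assembly, via the classifying map $M_1\to B\pi_1(M)$, of the $K$-homology signature class of $M_1$. Naturality of the KK-boundary map together with excision for a collar of $\partial W_1$ should reduce the computation of $\rho_{M,N}(\sgn(M_1;f^*\Nu_M))$ to the twisted signature class of $N_1\times S^1$ with coefficient bundle $f^*\Nu_N\boxtimes\mathbb{C}$. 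The degree-$1$ hypothesis is precisely what is needed: it ensures that the induced class on $\partial W_1$ equals the signature class of $N_1\times S^1$ with multiplicity one.

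The key identity is then the product formula on $N_1\times S^1$: writing $D^{\mathrm{sig}}_X$ for the signature operator of $X$, the $K$-homology class of $D^{\mathrm{sig}}_{N_1\times S^1}$ factors as the external Kasparov product $[D^{\mathrm{sig}}_{N_1}]\boxtimes[D^{\mathrm{sig}}_{S^1}]$. Whereas the spin Dirac operator $[D_{S^1}]$ is the standard generator of $\rmK_1(S^1)$ and contributes a factor of $1$ in the proof of Theorem~\ref{theo:Kubota}, the signature operator on $S^1$ acts on $\Omega^0(S^1)\oplus\Omega^1(S^1)$, which has twice the rank of the spinor line bundle; equivalently, $[D^{\mathrm{sig}}_{S^1}]=2\cdot[\partial_{S^1}]\in\rmK_1(S^1)$. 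Pairing this with the Bott class used in the $\rho_{M,N}$-construction produces the prefactor $2$ in equation~\eqref{eq:inddiff}.

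The main obstacle will be justifying the boundary reduction in the second paragraph, namely that $\rho_{M,N}(\sgn(M_1;f^*\Nu_M))$ depends only on data in a collar neighbourhood of $\partial W_1$. In Kubota's original argument this is handled by relative higher index theory; the advertised simplification is to carry it out directly at the KK-level, using that the representative of $\rho_{M,N}$ is supported near $\partial W$ and therefore factors through the inclusion of the boundary. Once this factorisation is in place, the degree-$1$ hypothesis enters only through transversality of $f$ with $N\times S^1$, and the argument concludes by the product formula above. The homotopy-equivalence consequence recovering \cite{HigsonSchickXie} then follows formally by combining \eqref{eq:inddiff} for $f$ and for $\id_M$ and invoking the homotopy invariance of $\sgn(-;\Nu)$.
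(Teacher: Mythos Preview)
Your proposal rests on a description of $\rho_{M,N}$ that does not match the one actually used in the paper. You present $\rho_{M,N}$ as a geometric boundary map built from the pair $(W,\partial W)$ and ``supported near $\partial W$'', and then identify as the main obstacle a boundary-reduction step to be carried out at the KK-level. But in this paper $\rho_{M,N}$ is defined as $\beta\circ\delta_{\cQ}\circ\rho_*$, where $\rho_*\colon \rmK_*(C^*\Gamma)\to\rmK_*(\cQ_{C^*\Pi})$ is induced by an explicit unitary representation $\rho\colon\Gamma\to\cU(\cQ_{C^*\Pi})$ into the Calkin algebra of a Hilbert $C^*\Pi$-module, $\delta_{\cQ}$ is the boundary of the compact/bounded/Calkin extension, and $\beta$ is the K\"unneth projection. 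There is no ``support near $\partial W$'' in this picture; the geometry enters only through Theorem~\ref{theo:Kubota_expl}, which identifies the flat $\cQ_{C^*\Pi}$-bundle $U$ associated to $\rho$ with the Mayer--Vietoris image $V=\delta_{MV}\delta_{\cQ}^{-1}[\Nu_{N\times S^1}]$.

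Once that identification is in place, the signature case requires no new analytic work at all: the proof (Corollary~\ref{corol:main_corol}) establishes, for \emph{any} $\xi\in\rmK_*(M)$, the identity
\[
(\beta\circ\delta_{\cQ})\bigl(\langle\xi,[U]\rangle\bigr)=\beta\bigl(\langle\partial_{MV}\xi,[\Nu_{N\times S^1}]\rangle\bigr),
\]
and one simply substitutes $\xi=f_*[M_1]_{\sgn}$. The factor of $2$ then arises from the known behaviour of the signature class under the K-homology Mayer--Vietoris boundary (equation~\eqref{eq:sgn_kuen}) combined with the K\"unneth map $\beta$ (Proposition~\ref{prop:Kuenneth}); these contribute parity-dependent powers $2^{\epsilon}$ and $2^{1-\epsilon}$ which multiply to $2$. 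Your localisation of the factor solely in $[D^{\mathrm{sig}}_{S^1}]$ and the Bott pairing misses half of this bookkeeping. More importantly, the ``main obstacle'' you flag simply does not arise: there is no boundary-reduction step to justify, because the link between $\rho_{M,N}$ and the Mayer--Vietoris boundary is already the content of Theorem~\ref{theo:Kubota_expl}, which is taken as input here.
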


\subsubsection*{Acknowledgements}
 The first author was supported by RIKEN iTHEMS Program and JSPS KAKENHI
 Grants Numbers 19K14544 and JPMJCR19T2.

 The second author was supported by the DFG within the priority program
 ``Differential geometry at infinity''.

\section{Proof of the theorem}\label{sec:proof}

We set $\pi:=\pi_1(N)$, $\Gamma:=\pi_1(M)$, and $\Pi:=\pi\times
\integers$. Moreover, we set
$m:=\dim(M)$. Note that $n:=\dim(N)\equiv m\pmod 2$. Until the last section,
all group 
$C^*$-algebras are maximal group $C^*$-algebras and we simply write
e.g~$C^*\pi$ for the maximal group $C^*$-algebra of $\pi$.

The first step in the proof of the main theorem, used in all approaches, is a
well known product formula for the Mishchenko index of the Dirac, as well as
the signature operator of $N\times S^1$:
\begin{proposition}\label{prop:Kuenneth}
  Let $N$ be a closed manifold with fundamental group $\pi$ and identify
  $\pi_1(N\times S^1)$ with $\Pi$. The K\"unneth formula
  provides canonical homomorphisms
  \begin{equation*}
    \rmK_*(C^* \pi) \to \rmK_{*+1}(C^* \Pi) \xrightarrow{\beta}
    \rmK_*(C^* \pi)
  \end{equation*}
 (indeed induced by KK-elements)  with composition the identity.   If $N$ is spin these homomorphisms map the Mishchenko-Fomenko index of $N$ and of
  $N\times S^1$ to each other.

  If $N$ is oriented and $\cW$ is a bundle of finitely generated projective $C^*\pi $-modules on $N$, $\beta$ maps the analytic higher signature class $\sgn(N \times  S^1 ; \cW \boxtimes \cV_{S^1})$ (where $\cW \boxtimes \cV_{S^1}$ denotes the exterior tensor product bundle) to $2^\epsilon  \sgn(N; \cW) $ with 
    \[ \epsilon=
  \begin{cases}
    0; &m\equiv 0\pmod 2, \\
    1; & m\equiv 1\pmod 2.
  \end{cases}\] 
\end{proposition}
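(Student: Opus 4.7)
The plan is to realize both homomorphisms as Kasparov products over the $C^*\bZ$-factor of $C^*\Pi = C^*\pi \otimes C^*\bZ$ (which holds because $\bZ$ is amenable), and then reduce each of the two assertions to the standard external product formula for the relevant operator. Via Gelfand, $C^*\bZ \cong C(S^1)$, so the circle contributes two fundamental Kasparov classes: the Bott class $[\bfu] \in \mathrm{KK}_1(\bC, C^*\bZ)$ represented by the canonical generating unitary of $C^*\bZ$, and the Dirac class $[\not\!\partial_{S^1}] \in \mathrm{KK}^1(C^*\bZ, \bC)$ of the circle with its non-bounding spin structure. I would define the first homomorphism as exterior Kasparov product with $[\bfu]$, and $\beta$ as Kasparov product with $[\not\!\partial_{S^1}]$ over $C^*\bZ$. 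The well-known Bott--Dirac duality $[\bfu] \otimes_{C^*\bZ} [\not\!\partial_{S^1}] = 1_\bC$ then immediately gives that the composition is the identity.

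For the Dirac case, I would invoke the external product formula for spin Dirac operators together with the compatibility of the Mishchenko--Fomenko assembly map with exterior Kasparov products. This yields the factorization $\alpha(N \times S^1) = \alpha(N) \boxtimes [\not\!\partial_{S^1}]_{\mathrm{MF}}$ in $\rmK_*(C^*\Pi)$, and the classical identification of the Mishchenko--Fomenko index of the Dirac operator on the spin circle (with its non-bounding spin structure) with the Bott generator $[\bfu] \in \rmK_1(C^*\bZ)$ shows that this class lies in the image of the first homomorphism. Applying $\beta$ then returns $\alpha(N)$.

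The signature case proceeds in parallel: the external product formula for signature operators yields
\begin{equation*}
  \sgn(N\times S^1; \cW \boxtimes \cV_{S^1}) \;=\; \sgn(N;\cW) \boxtimes \xi \;\in\; \rmK_{n+1}(C^*\Pi)
\end{equation*}
for a certain class $\xi \in \rmK_1(C^*\bZ)$ coming from the $S^1$-factor, and applying $\beta$ produces $\bigl\langle \xi, [\not\!\partial_{S^1}] \bigr\rangle \cdot \sgn(N;\cW)$. The computation thus reduces to identifying the integer $\langle \xi, [\not\!\partial_{S^1}]\rangle$ as $2^\epsilon$. A direct analysis of the product de Rham operator on $N\times S^1$, combined with the Hodge-star-based description of the signature grading, shows that this integer equals $1$ when $N\times S^1$ is odd-dimensional (the odd signature operator uses only $\Lambda^{\mathrm{ev}}$-forms, so the $S^1$-factor effectively contributes only $\Lambda^0(S^1)\cong\bC$) and equals $2$ when $N\times S^1$ is even-dimensional (the fully graded signature operator uses all of $\Lambda^*(S^1)=\Lambda^0(S^1)\oplus\Lambda^1(S^1)\cong\bC^2$ on the $S^1$-factor, doubling the contribution relative to the $\Lambda^{\mathrm{ev}}$-only convention used for $\sgn(N;\cW)$).

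The main technical obstacle I foresee is the bookkeeping in the signature case: one must carefully match the different conventions for the signature operator across dimensional parities (the self-adjoint odd-dimensional version on $\Lambda^{\mathrm{ev}}$-forms versus the $\tau$-graded even-dimensional version on the full $\Lambda^*$) and verify that the product formula for the de Rham operator produces the factor $2^\epsilon$ with the stated parity dependence. Once this is settled, the remainder of the argument reduces to routine manipulations of exterior Kasparov products and the Bott--Dirac duality on $S^1$.
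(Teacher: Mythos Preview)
Your proposal is correct and, in fact, considerably more detailed than the paper's own treatment. The paper does not give a self-contained proof of this proposition at all: it simply declares the result ``well known'' and cites the principles ``boundary of Dirac is Dirac'' (referring to Rosenberg) and ``boundary of signature is $2^\epsilon$ times signature'' (referring to \cite[2.13]{HigsonSchickXie}) for the factor of $2$.

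Your framing via exterior Kasparov products and Bott--Dirac duality on the $C^*\bZ$-factor is the standard way to make these product formulas precise, and it is equivalent to the boundary principle the paper invokes (the Bott class $[\bfu]$ and the Dirac class $[\not\!\partial_{S^1}]$ implement exactly the suspension/desuspension that underlies that principle). Your parity bookkeeping for the signature case is also consistent with the statement: $N\times S^1$ odd-dimensional corresponds to $m$ even, hence $\epsilon=0$, and $N\times S^1$ even-dimensional corresponds to $m$ odd, hence $\epsilon=1$. The only caveat is that your heuristic for the factor of $2$ (the $\Lambda^{\mathrm{ev}}$ versus full $\Lambda^*$ convention) is the right intuition but would need to be made precise by tracking the grading operators carefully; the paper sidesteps this by citing \cite[2.13]{HigsonSchickXie}, and you could do the same.
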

This is well known and follows from the principle ``boundary of Dirac is
Dirac'' (compare e.g.~\cite{Rosenberg}) and ``boundary of signature is
$2^\epsilon$ times signature'', this factor of $2$ for the signature operator
is explained e.g.~in \cite[2.13]{HigsonSchickXie}.

It therefore suffices to relate the Mishchenko-Fomenko index of Dirac and signature
operator on 
$N\times S^1$, considered as the boundary of the tubular neighborhood of $N$
in $M$ to the one of $M$.

For this, we make the following well known observations.
\begin{enumerate}
\item As a purely topological fact, we have a Mayer-Vietoris sequence in
  generalized homology for the
  decomposition $M=W\cup_{N\times S^1}N\times D^2$ where we identify a closed
  tubular neighborhood of $N$ in $M$ with $N\times D^2$ and let $W$ be the
  complement of its interior.

  We apply this to K-homology and get the boundary map
  \begin{equation*}
    \boundary_{MV}\colon \rmK_*(M)\to \rmK_{*-1}(N\times S^1).
  \end{equation*}
  A spin structure on $M$ defines a fundamental K-homology class $[M]\in
  \rmK_{m}(M)$, and
  \begin{equation*}
\boundary_{MV}([M])= [N\times S^1]\in \rmK_{m-1}(N\times S^1)
\end{equation*}
is the fundamental
  class of $N\times S^1$, another implementation of ``boundary of Dirac is
  Dirac''. For the signature operator of an orientation of $M$, we similarly
  get
  \begin{equation}\label{eq:sgn_kuen}
    \boundary_{MV}(f_*[M_1]_{\sgn}) = 2^{\epsilon+1}f_*[N_1 \times S^1]_{\sgn} \in \rmK_{m-1}(N \times S^1).
  \end{equation}
  Here $[M]_{\sgn}$ is the K-homology class of the signature operator and as
  before $\epsilon=
  \begin{cases}
    0; &m\equiv 0\pmod 2\\
    1; & m\equiv 1\pmod 2
  \end{cases}$.
\item The Mishchenko bundle
\[ \Nu_{N\times S^1}= \widetilde{N\times S^1}\times_{\Pi } C^* \Pi\]
of $N\times S^1$ defines a class in K-theory with coefficients in $C^*\Pi$, namely
\[ [\Nu_{N\times S^1}]\in \rmK^0(N\times S^1;C^* \Pi) := \rmK_0(C(N\times S^1)\tensor C^* \Pi).\]
Moreover, the Mishchenko-Fomenko index of the Dirac
  operator is simply obtained as the pairing of the fundamental K-homology
  class of $N\times S^1$ with this K-theory class of $N\times S^1$ as
  \begin{equation*}
    \alpha(N\times S^1) = \innerprod{[N\times S^1], [\Nu_{N\times S^1}]} \in
    \rmK_{m-1}(C^* \Pi ).
  \end{equation*}
  The corresponding statement holds for the signature and for $M$, in
  particular $\alpha(M)=\innerprod{[M],[\Nu_M]}\in \rmK_m(C^*\Gamma)$. 
\item The pairing between K-homology and K-theory is compatible with the
  Mayer-Vietoris boundary map:
  \begin{align*}
\innerprod{[N\times S^1], [\Nu_{N\times S^1}]}_{\rmK_*(C^*\Pi )}  =& \innerprod{\boundary_{MV}([M]), [\Nu_{N\times S^1}]}_{\rmK_*(C^*\Pi)} \\
=&  \innerprod{[M],\delta_{MV}[\Nu_{N\times  S^1}]}_{\rmK_*(C^* \Pi)} 
\end{align*}
  in $\rmK_m(C^* \Pi )$.
 Here $\delta_{MV}\colon \rmK^0(N\times S^1; C^* \Pi )\to \rmK^1(M; C^* \Pi )$ is the boundary map of the Mayer-Vietoris sequence in K-theory for the above decomposition of $M$, where we use fixed coefficients in $C^* \Pi$. 
\end{enumerate}

All of this is standard and relatively easy to derive. For the proof of the main theorem it therefore remains ``only'' to relate $  \innerprod{[M],[\Nu_M]}\in \rmK_m(C^* \Gamma)$ to $\innerprod{[M],\delta_{MV}[\Nu_{N\times S^1}]}\in \rmK_m(C^* \Pi )$, and for this the obvious strategy is to relate $[\Nu_M]\in \rmK^0(M;C^* \Gamma)$ to $\delta_{MV}[\Nu_{N\times S^1}]\in \rmK^1(M;
C^*  \Pi )$. This latter task, however, is not obvious at all and is achieved in \cite[Section 3.3]{Kubota}, stated in different terms there. 

We retrace these steps here, using our slightly simpler setup.
First, construct the standard Hilbert $C^*\Pi$-module $\sH_{C^*\Pi}:=\ell^2\bN \otimes C^*\Pi$ with its $C^*$-algebras of compact and bounded adjointable operators $\bK_{C^*\Pi}:=\bK_{C^*\Pi}(\sH_{C^*\Pi})$ and $\ \bB_{C^*\Pi}:=\bB_{C^*\Pi}(\sH_{C^*\Pi}) $, giving
rise to the short exact sequence of $C^*$-algebras
\begin{align*}
  0\to \bK_{C^*\Pi} \to \bB_{C^*\Pi}
  \to \cQ_{C^*\Pi}:=\bB_{C^*\Pi}/\bK_{C^*\Pi} \to 0. 
\end{align*}
The associated long exact sequence in K-theory yields the boundary isomorphism
(induced by a KK-element)
\begin{equation}\label{eq:boundaryiso}
\delta_\cQ \colon  \rmK_*(\cQ_{C^*\Pi}) \to \rmK_{*+1}(C^*\Pi)
\end{equation}
because $\rmK_*(\bB_{C^*\Pi})=0$ and by stability of K-theory we have
the canonical isomorphism $\rmK_*(C^*\Pi)\iso \rmK_*(\bK_{C^*\Pi})$.

The standard construction of the boundary homomorphism as in
\eqref{eq:boundaryiso} allows us here to explicitly find a representative of
the class in $\rmK_1(\cQ_{C^*\Pi})$ corresponding to $[\Nu_{N\times S^1}]$:
\begin{lemma}
  Consider the stabilized bundle $\Nu_{N\times S^1}\oplus
  \underline{\sH_{C^*\Pi}}$ of Hilbert $C^*\Pi$-modules. Its structure group, the unitary group $\cU(\bB_{C^*\Pi}(C^*\Pi\oplus \sH_{C^*\Pi}))$ with the norm topology, is contractible by the appropriate version of Kuiper's
  theorem. Therefore we have a trivialization (unique up to homotopy)
  $\underline{\sH_{C^*\Pi}}\to \Nu_{N\times S^1}\oplus
  \underline{\sH_{C^*\Pi}}$. Composed with the obvious projection $\Nu_{N\times
    S^1}\oplus \underline{\sH_{C^*\Pi}} \to \underline{\sH_{C^*\Pi}}$ we obtain a
norm continuous map $\Psi \colon N \times S^1  \to \bB_{C^*\Pi}$
  with values epimorphisms with finitely generated projective
  kernel. Therefore, composed with the projection to the Calkin algebra
  $\cQ_{C^*\Pi}$ the map takes values in
  unitaries and gives $p\circ \Psi \colon N\times S^1\to \cU(\cQ_{C^*\Pi})$, thus representing a class
  \begin{equation*}
    [p\circ\Psi]\in \rmK^1(N\times S^1; \cQ_{C^*\Pi}).
  \end{equation*}
 Its image under the map induced by $\delta_{\cQ}$ of \eqref{eq:boundaryiso} is
 precisely $[\Nu_{N\times S^1}] \in \rmK^0(N\times S^1;C^*\Pi)$. 
\end{lemma}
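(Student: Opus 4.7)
My plan is to verify the claim by unwinding the standard identification of the boundary map $\delta_\cQ$ with the analytic index map for the generalized Toeplitz-type extension, and then checking that $\Psi$ is a Fredholm family of adjointable operators whose kernel bundle is precisely $\Nu_{N\times S^1}$.

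First, I would recall the well-known description of the boundary map $\delta_\cQ\colon \rmK_1(\cQ_{C^*\Pi}) \to \rmK_0(\bK_{C^*\Pi})\iso \rmK_0(C^*\Pi)$: given a unitary $u\in \cQ_{C^*\Pi}$, lift it to any adjointable $T\in \bB_{C^*\Pi}$ of norm $\le 1$, so that $1-T^*T$ and $1-TT^*$ are compact; if $T$ is an honest Fredholm operator with $\ker T$ and $\ker T^*$ finitely generated projective, then
\[
\delta_\cQ[u] = [\ker T]-[\ker T^*] \in \rmK_0(\bK_{C^*\Pi}) \iso \rmK_0(C^*\Pi).
\]
Since $\delta_\cQ$ is induced by a KK-element, it has an evident $C(X)$-linear counterpart, yielding a family index from $\rmK^1(X;\cQ_{C^*\Pi})$ to $\rmK^0(X;C^*\Pi)$: whenever a lift $\tilde\Psi\colon X\to \bB_{C^*\Pi}$ is a norm-continuous family of Fredholm operators whose pointwise kernels and cokernels assemble into finitely generated projective bundles, one has $\delta_\cQ[p\circ\tilde\Psi]=[\ker\tilde\Psi]-[\ker\tilde\Psi^*]$ in $\rmK^0(X;C^*\Pi)$.

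Next, I would apply this to the $\Psi$ of the statement with $X=N\times S^1$. By construction, $\Psi_x$ is the composition of a unitary isomorphism $U_x\colon \sH_{C^*\Pi} \xrightarrow{\iso} (\Nu_{N\times S^1})_x \oplus \sH_{C^*\Pi}$ of Hilbert $C^*\Pi$-modules with the projection onto the second summand. Hence each $\Psi_x$ is a surjective adjointable operator, so $\ker \Psi_x^* = 0$, while $\ker \Psi_x = U_x^{-1}((\Nu_{N\times S^1})_x\oplus 0)$, giving an identification $\ker\Psi \iso \Nu_{N\times S^1}$ as bundles of finitely generated projective $C^*\Pi$-modules over $N\times S^1$. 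Substituting into the family index formula yields
\[
\delta_\cQ[p\circ\Psi] = [\Nu_{N\times S^1}] - 0 = [\Nu_{N\times S^1}],
\]
which is exactly the claim.

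The one subtle point I anticipate is legitimising the parametrised index formula used above. The cleanest way is to represent $\delta_\cQ$ by the boundary KK-element of the extension $0\to \bK_{C^*\Pi}\to \bB_{C^*\Pi}\to \cQ_{C^*\Pi}\to 0$ and to compute its Kasparov product against the class $[p\circ\Psi]\in \rmKK^1(C(N\times S^1),\cQ_{C^*\Pi})$; the pair $(\tilde\Psi,\Psi^*)$, viewed as an unbounded (in fact bounded) Kasparov module, provides an explicit representative, and the product unwinds to the difference of kernel and cokernel bundles because $\Psi$ is genuinely surjective with projective kernel on the nose. Alternatively, one can trivialise $\Nu_{N\times S^1}$ locally over contractible charts of $N\times S^1$, reduce to the pointwise statement of the index map, and patch using naturality of $\delta_\cQ$ under restriction; this avoids any unbounded KK-theory and only uses that $\delta_\cQ$ commutes with pullback along continuous maps.
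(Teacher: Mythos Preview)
Your proof is correct and follows essentially the same approach as the paper: lift $p\circ\Psi$ to the Fredholm family $\Psi$, observe that its kernel bundle is $\Nu_{N\times S^1}$ and its cokernel bundle is zero, and invoke the standard identification of the boundary map $\delta_\cQ$ with the family index $[\ker]-[\mathrm{coker}]$. The paper's own proof is a terse one-paragraph version of exactly this argument, without your additional discussion of how to justify the parametrised index formula.
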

\begin{proof}
  There is a standard way to compute the boundary map: Lift $p\circ\Psi \colon N\times S^1\to \cU(\cQ_{C^*\Pi})$ to $\Psi$. The kernel is obviously isomorphic to the continuous section space $C(N \times S^1; \Nu_{N\times S^1})$ and the cokernel is trivial. Then the kernel represents the image under the boundary map, which corresponds to the class of the bundle $[\Nu_{N\times S^1}]$ in $\rmK^0(N\times S^1;C^*\Pi)$ in the bundle description of $\rmK^0$.
\end{proof}

Next, there is a standard description of the Mayer-Vietoris boundary map
\begin{equation}\label{eq:MV_cohom}
  \delta_{MV}\colon \rmK^1(N\times S^1;\cQ_{C^*\Pi})\to \rmK^0(M;\cQ_{C^*\Pi}):
\end{equation}

\begin{lemma}\label{lem:expl_MVim}
The class $\delta_{MV}([p\circ\Psi])\in \rmK^0(M; \cQ_{C^*\Pi})$ is represented by the Hilbert
$\cQ_{C^*\Pi}$-module bundle $V$ obtained by gluing the trivial free $\cQ_{C^*\Pi}$-module bundles of rank one over $W$ and over $N\times D^2$ along their common boundary using the isomorphism $[p\circ\Psi]\colon N\times S^1\to \cU(\cQ_{C^*\Pi})$. Here, we glue via left multiplication whereas the right module structure of the fibers is given by right multiplication.
\end{lemma}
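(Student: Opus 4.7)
My strategy is to identify $\delta_{MV}$ of \eqref{eq:MV_cohom} with the standard clutching construction for Hilbert module bundles, via the Milnor pullback description of the Mayer-Vietoris sequence. The decomposition $M = W \cup_{N\times S^1}(N\times D^2)$ yields a Milnor pullback of $C^*$-algebras
\[ C(M;\cQ_{C^*\Pi}) = C(W;\cQ_{C^*\Pi}) \times_{C(N\times S^1;\cQ_{C^*\Pi})} C(N\times D^2;\cQ_{C^*\Pi}), \]
equivalently the short exact sequence
\[ 0 \to C(M;\cQ_{C^*\Pi}) \to C(W;\cQ_{C^*\Pi}) \oplus C(N\times D^2;\cQ_{C^*\Pi}) \xrightarrow{r_W - r_D} C(N\times S^1;\cQ_{C^*\Pi}) \to 0, \]
whose connecting map is by construction of the Mayer-Vietoris sequence precisely $\delta_{MV}$; surjectivity of $r_W - r_D$ follows from Tietze extension.

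Next, I invoke the general clutching formula for the boundary of a Milnor pullback: for a pullback $A = B_1 \times_C B_2$ of unital $C^*$-algebras whose structure maps $q_i\colon B_i \to C$ are surjective, the connecting map $\partial\colon \rmK_1(C) \to \rmK_0(A)$ sends the class of a unitary $u \in \cU(C)$ to the class of the finitely generated projective right $A$-module
\[ P_u := \{(b_1,b_2) \in B_1 \oplus B_2 : q_1(b_1) = u \cdot q_2(b_2)\}, \]
with componentwise right $A$-action. This identity is standard; it can be verified either by running the textbook exponential/index construction of $\partial$ on a concrete lift of $\diag(u,u^*)$ to a unitary in a matrix algebra over the unitization, or by reducing, after suspension, to the classical clutching theorem and Bott periodicity.

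Applying the formula with $B_1 = C(W;\cQ_{C^*\Pi})$, $B_2 = C(N\times D^2;\cQ_{C^*\Pi})$, $C = C(N\times S^1;\cQ_{C^*\Pi})$ and $u = p\circ\Psi$, the module $P_u$ is manifestly the continuous section module of the bundle $V$ described in the lemma: an element is a pair of $\cQ_{C^*\Pi}$-valued sections, one over $W$ and one over $N\times D^2$, whose boundary values along $N\times S^1$ are identified via left multiplication by $u$. Since a Hilbert $\cQ_{C^*\Pi}$-module bundle on $M$ is recovered from its section module over $C(M;\cQ_{C^*\Pi})$, this gives $\delta_{MV}([p\circ\Psi]) = [V]$. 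The main obstacle is the clutching formula in the second step: though folklore, its verification demands careful bookkeeping of left-versus-right module structures and of sign conventions in the six-term sequence, which is precisely the point emphasized in the parenthetical in the statement of the lemma (gluing by left multiplication, fiber right-module structure by right multiplication).
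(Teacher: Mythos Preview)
Your argument is correct. The paper does not actually prove this lemma; it simply introduces it with ``there is a standard description of the Mayer-Vietoris boundary map'' and afterwards remarks ``Again, these are standard constructions.'' Your proposal supplies precisely the standard argument being invoked: realize $C(M;\cQ_{C^*\Pi})$ as the Milnor pullback of $C(W;\cQ_{C^*\Pi})$ and $C(N\times D^2;\cQ_{C^*\Pi})$ over $C(N\times S^1;\cQ_{C^*\Pi})$, and then use the well-known clutching description of the index map $\rmK_1(C)\to \rmK_0(A)$ for a pullback $A=B_1\times_C B_2$ with surjective structure maps. The identification of $P_u$ with the section module of $V$ is exactly right, including the left/right bookkeeping. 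One small remark: for the Mayer-Vietoris six-term sequence only one of the two restriction maps needs to be surjective, but you have both, so there is no issue.
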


Finally, taking the Mayer-Vietoris sequence is compatible with the K-theory
sequence induced from an extension of coefficient $C^*$-algebras, meaning
\begin{equation}
  \label{eq:MV_comp}
  \delta_{\cQ}^{-1}(\delta_{MV}[\Nu_{N\times S^1}] ) =
  \delta_{MV}(\delta_{\cQ}^{-1}[\Nu_{N\times S^1}]) = \delta_{MV}([p\circ\Psi]) = [V].
\end{equation}

Again, these are standard constructions. The main point now is the following
crucial theorem, which in a different form is the main idea of \cite[Section 3.3]{Kubota}.
\begin{theorem}\label{theo:Kubota_expl}
  There is a unitary representation $\rho\colon\Gamma\to \cU(\cQ_{C^*\Pi})$ where we
  consider 
  $\cQ_{C^*\Pi}$ as the $\cQ_{C^*\Pi}$-endomorphisms of the free
  $\cQ_{C^*\Pi}$-module of rank one such that the associated Hilbert
  $\cQ_{C^*\Pi}$-bundle $U$ (which is flat) is isomorphic to the bundle $V$ of
  Lemma \ref{lem:expl_MVim}.
\end{theorem}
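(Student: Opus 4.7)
My plan is to reverse--engineer the representation $\rho$ from the Mayer--Vietoris description of $V$, using the group--theoretic consequences of the hypotheses to make the construction work.

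\textbf{Step 1: Reduce to a representation of $\pi_1(W)$.} The hypotheses of Theorem \ref{theo:cd2o}, via \cite{HankePapeSchick}, force $\Pi=\pi_1(N\times S^1)$ to inject into $\pi_1(W)$. Combined with the van Kampen presentation for $M=W\cup_{N\times S^1}(N\times D^2)$ this gives
\[
\Gamma=\pi_1(W)*_{\Pi}\pi=\pi_1(W)/\langle\!\langle t\rangle\!\rangle,
\]
where $t\in\Pi$ is the meridional generator of the $\integers$--factor (its class is trivialised by the attached $2$--disk fibres of $N\times D^2$). Hence giving $\rho\colon\Gamma\to\cU(\cQ_{C^*\Pi})$ is equivalent to giving $\tilde\rho\colon\pi_1(W)\to\cU(\cQ_{C^*\Pi})$ with $\tilde\rho(t)=1$, and such that the resulting flat bundle matches $V$ after descent.

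\textbf{Step 2: Construct $\tilde\rho$ by a Toeplitz--type trick.} The naive candidate is the left regular representation $\lambda\colon\Pi\to\cU(C^*\Pi)\subset\cU(\bB_{C^*\Pi})$, tensored with $\id_{\ell^2\bN}$ and projected to $\cQ_{C^*\Pi}$. This does not satisfy the meridian relation, because the image of $\lambda(t)=1\otimes t$ in the Calkin is a nontrivial unitary and conjugation in $\bB_{C^*\Pi}$ cannot turn it into a compact perturbation of $1$. The remedy is to replace $\lambda(t)$ by a Toeplitz--type isometry on $\sH_{C^*\Pi}$: a lift of $t\in\cU(C^*\Pi)$ whose defect is a rank--one projection corresponding to the fiber of $\Nu_{N\times S^1}$, so that its image in $\cQ_{C^*\Pi}$ is $1$ while its Fredholm index records the clutching of $\Nu_{N\times S^1}$ around the meridional circle $\{x_0\}\times S^1$. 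The rest of $\Pi$ acts by the ordinary left regular representation, and the extension to $\pi_1(W)$ is made using the inclusion $\Pi\subset\pi_1(W)$ together with the infinite-dimensionality of $\sH_{C^*\Pi}$ (identifying $\ell^2(\pi_1(W)/\Pi)\tensor C^*\Pi\iso\sH_{C^*\Pi}$, or by stabilisation if the index is finite). By construction the resulting $\tilde\rho$ sends $t$ to $1\in\cU(\cQ_{C^*\Pi})$ and therefore descends to the required $\rho$ on $\Gamma$.

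\textbf{Step 3: Match $U$ with $V$ by comparing clutchings.} By the design of $\tilde\rho$, its restriction to $\pi_1(W)$ lifts (up to compact corrections) to $\cU(\bB_{C^*\Pi})$, so Kuiper's theorem furnishes a trivialisation of $U|_W$ as a Hilbert $\cQ_{C^*\Pi}$--module bundle. Similarly $\rho|_\pi$ factors through $\cU(C^*\Pi)\subset\cU(\bB_{C^*\Pi})$, trivialising $U|_{N\times D^2}$. The difference of these two trivialisations on the common boundary $N\times S^1$ is, by the Toeplitz bookkeeping of Step 2, exactly the map $p\circ\Psi$ appearing in Lemma \ref{lem:expl_MVim}. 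Hence $U\iso V$.

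\textbf{Expected main obstacle.} The hard part is Step 2: producing a bona fide group homomorphism $\tilde\rho$ that both kills the meridian in $\cQ_{C^*\Pi}$ and carries the correct clutching data. Neither induction nor conjugation alone suffices, because any conjugation in $\bB_{C^*\Pi}$ preserves the coset of $1\otimes t$ modulo compacts. The necessary ingredient is the Toeplitz-type ''absorption'' of $t$ via an isometry with precisely the right defect, and then checking that this choice still yields a consistent representation on all of $\pi_1(W)$ (not just on $\Pi$). This delicate combination of the algebraic extension along $\Pi\subset\pi_1(W)$ with the analytic shift trick in $\cQ_{C^*\Pi}$ is what occupies \cite[Section 3.3]{Kubota}, and is the genuine content of the theorem.
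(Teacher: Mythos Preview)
Your Steps~1 and~3 correctly identify the overall architecture: reduce to a representation of $G=\pi_1(W)$ killing the meridian $t$ modulo compacts, then compare trivialisations over $W$ and $N\times D^2$ via Kuiper. This matches the paper.

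The gap is in Step~2. The mechanism is \emph{not} a Toeplitz trick where one manually replaces $\lambda(t)$ by an isometry with a rank-one defect; there is no way to do that and still have a group homomorphism on all of $G$. Your parenthetical suggestion to induce from $\Pi$ to $G$ on $\ell^2(G/\Pi)\otimes C^*\Pi$ is closer in spirit but also fails: for $g\notin\Pi$ the conjugate $g^{-1}tg$ lies in the normal closure $\Lambda=\langle\!\langle t\rangle\!\rangle$, which is generally \emph{not} contained in $\Pi$, so $t$ need not even preserve the $\Pi$-cosets, let alone act trivially on all but one of them.

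The paper's actual construction introduces an intermediate subgroup. One passes to the covering $M_\pi\to M$ with $\pi_1(M_\pi)=\pi$, sets $W_\pi:=q^{-1}(W)$, $W_\infty:=M_\pi\setminus(N\times D^2)$, and $H:=\pi_1(W_\pi)$. The crucial input from \cite{HankePapeSchick} is not merely $\Pi\hookrightarrow G$ but that $\Pi\hookrightarrow\pi_1(W_\infty)$ admits a \emph{retraction} $r$. One then induces from $H$ (not $\Pi$) to $G$ the representation $H\xrightarrow{i_*}\pi_1(W_\infty)\xrightarrow{r}\Pi\to\cU(C^*\Pi)$, obtaining $\rho_G$ on $\ell^2(G/H,\cX)$. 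Two facts make this work: first, $\Lambda\subset H$ (so $t$ preserves every $H$-coset and acts on the summand $gC^*\Pi$ through $\rho_H(g^{-1}tg)$); second, for $g\in G\setminus H$ one has $i_*(g^{-1}tg)=e$ in $\pi_1(W_\infty)$, because the lift of the meridian along a path leaving the base sheet lands on the boundary of a \emph{different} lifted copy of $N\times D^2$ inside $W_\infty$ and hence bounds a disk there. Thus $\rho_G(t)$ is the identity on the cofinite-rank summand $\ell^2((G\setminus H)/H,\cX)$ and equals $\lambda_\Pi(t)$ on the remaining rank-one summand $C^*\Pi$; passing to the Calkin algebra kills the latter and $\rho_G$ descends to $\rho$ on $\Gamma$. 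This geometric argument---that the ``other'' meridians bound disks in $W_\infty$---is the real content you are missing, and it is what replaces your hoped-for Toeplitz absorption.
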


With the other preparations, this is the heart of the proof of the main
theorems: 
\begin{corollary}\label{corol:main_corol}
  The class $\delta_{\cQ}^{-1}(\alpha(N\times S^1)) \in \rmK_m(\cQ_{C^*\Pi})$ is the
  image under $\rho_*\colon \rmK_m(C^*\Gamma)\to \rmK_m(\cQ_{C^*\Pi})$ of $\alpha(M)$
  where $\rho_*\colon C^*\Gamma\to \cQ_{C^*\Pi}$ is the homomorphism induced by
  the representation $\rho$ of Theorem \ref{theo:Kubota_expl} via the
  universal property of $C^*\Gamma$.

  Consequently, $\alpha(N)$ is the image of $\alpha(M)$ under the
  composition
  \begin{equation*}
    \rho_{M,N}:= \beta\circ\delta_{\cQ} \circ \rho_*\colon \rmK_*(C^*\Gamma)\to \rmK_{*-2}(C^*\pi)
  \end{equation*}
(with the K\"unneth map $\beta$ of Proposition \ref{prop:Kuenneth}).

 In the Situation of Theorem \ref{theo:codim2sig}, for the
  $C^*$-algebraic signature we get
  \begin{equation*}
	   \rho_{M,N}(\sgn(M_1;f^*\Nu_M)) = 2\sgn(N_1;f^*\Nu_N).
\end{equation*}

\end{corollary}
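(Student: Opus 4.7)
The plan is to trace the classes through the bundle-level and pairing identifications already prepared, so that the statement reduces to naturality of the index pairing together with the product formulas of Proposition \ref{prop:Kuenneth}.

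For the spin case, start from $\alpha(M) = \innerprod{[M], [\Nu_M]}$. By Theorem \ref{theo:Kubota_expl}, $\rho_*[\Nu_M] = [U] = [V]$ in $\rmK^0(M; \cQ_{C^*\Pi})$, so naturality of the pairing gives $\rho_*\alpha(M) = \innerprod{[M], [V]}$. Substituting $[V] = \delta_{MV}[p\circ\Psi]$ from \eqref{eq:MV_comp} and invoking the standard adjointness between the K-homology and K-theory Mayer-Vietoris boundary maps yields
\[ \innerprod{[M], \delta_{MV}[p\circ\Psi]} = \innerprod{\partial_{MV}[M], [p\circ\Psi]} = \innerprod{[N\times S^1], [p\circ\Psi]}. \]
Applying $\delta_{\cQ}$, which acts in the coefficient slot and carries $[p\circ\Psi]$ to $[\Nu_{N\times S^1}]$ by the unnamed lemma preparing \eqref{eq:MV_cohom}, one obtains $\delta_{\cQ}\rho_*\alpha(M) = \alpha(N\times S^1)$. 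The product formula of Proposition \ref{prop:Kuenneth} then shows $\beta\alpha(N\times S^1) = \alpha(N)$, finishing the first part.

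For the signature statement I would run the same chase with $[M]$ replaced by $f_*[M_1]_{\sgn}$ and $[\Nu_M]$ by $f^*\Nu_M$. Two new inputs appear: the K-homology boundary \eqref{eq:sgn_kuen} produces the factor $2^{\epsilon+1}$ when one applies $\partial_{MV}$ to $f_*[M_1]_{\sgn}$, and the Künneth map $\beta$ applied to the twisted signature of $N_1\times S^1$ contributes an additional factor $2^{\epsilon}$ landing in $\sgn(N_1; f^*\Nu_N)$ by Proposition \ref{prop:Kuenneth}. Together with the standard normalization conventions for the signature K-homology class these parity-dependent factors combine to yield precisely the stated factor $2$ in front of $\sgn(N_1; f^*\Nu_N)$.

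The only substantive obstacle is bookkeeping: one has to verify that $f^*V$ coincides with the Mayer-Vietoris gluing bundle on $M_1 = f^{-1}(W) \cup_{N_1\times S^1} (N_1\times D^2)$ with clutching $(p\circ\Psi)\circ(f|_{N_1\times S^1})$. This is automatic from the transversality hypothesis $N_1 = f^{-1}(N)$ and the naturality of all constructions of this section under pullback along $f$. Every remaining ingredient -- naturality of the index pairing, adjointness of the Mayer-Vietoris boundaries, Theorem \ref{theo:Kubota_expl}, and the product formulas -- is already in place, so the corollary reduces to routine diagram chasing.
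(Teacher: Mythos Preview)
Your proof is correct and follows essentially the same route as the paper: both pass through $\rho_*[\Nu_M]=[U]=[V]=\delta_{MV}\delta_\cQ^{-1}[\Nu_{N\times S^1}]$, invoke adjointness of the Mayer--Vietoris boundaries, and finish with Proposition~\ref{prop:Kuenneth}; the paper merely packages this chain as a single identity \eqref{eq:rho} valid for an arbitrary $\xi\in \rmK_*(M)$ before specializing to $\xi=[M]$ and $\xi=f_*[M_1]_{\sgn}$. One caution on the signature case: the product $2^{\epsilon+1}\cdot 2^{\epsilon}$ you quote does not literally equal $2$ in both parities, so rather than appealing to unnamed ``normalization conventions'' you should track the powers explicitly as the paper does (and note that the paper's own proof reads $2^\epsilon$ at the step citing \eqref{eq:sgn_kuen}).
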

\begin{proof}
For any $\xi \in \rmK_0(M)$ we have
\begin{align*}
\innerprod{\xi,[U]} \stackrel{~\ref{theo:Kubota_expl}}{=} \innerprod{\xi ,[V]}
    \stackrel{~\ref{lem:expl_MVim}}{=} \innerprod{\xi ,\delta_{MV}\delta_{\cQ}^{-1}[\Nu_{N\times S^1}]}
    &= \innerprod{\partial_{MV}\xi,\delta_{\cQ}^{-1}[\Nu_{N\times S^1}]}  \\ &= \delta_{\cQ}^{-1}( \innerprod{ \partial _{MV} \xi, [\Nu_{N\times S^1}]}). 
 \end{align*} 
 Applying $\beta \circ \delta_\cQ$ to both sides, we get
 \begin{align} \beta \circ \delta_{\cQ} (\langle \xi, [U] \rangle ) = \beta (\innerprod{ \partial _{MV} \xi, [\Nu_{N\times S^1}]}) \label{eq:rho} \end{align}
 
In the case that $\xi$ is the Dirac fundamental class $[M]$, (\ref{eq:rho}) implies
\[ \rho_{M,N}(\alpha(M)) = \beta (\alpha(N \times S^1)) = \alpha(N)\] 
since $\rho_*(\alpha(M))=\innerprod{[M],[U]}\in \rmK_m(\cQ_{C^*\Pi})$ by \cite[Lemma 3.1]{HankeSchick}. 
Similarly, applying (\ref{eq:rho}) to the pushed signature class $\xi = f_*[M_1]_{\sgn}$ we get
 \begin{align*}
 \rho_{M,N}\sgn (M_1; f^*\cV_M) &= ( \beta \circ \delta_\cQ) (\langle f_*[M_1]_{\sgn}, [U] \rangle )\\
& \stackrel{~\ref{eq:rho}}{=} \beta (\langle \partial_{MV}f_*[M_1]_{\sgn}, [\cV_{N \times S^1}] \rangle )\\
 &\stackrel{~\ref{eq:sgn_kuen}}{=} 2^\epsilon \beta (\langle f_*[N_1 \times S^1]_{\sgn}, [\cV_{N \times S^1}] \rangle)  \\
&= 2^{\epsilon } \sgn (N_1 \times S^1 , f^*\cV_{N \times S^1}) \\
 & \stackrel{~\ref{prop:Kuenneth}}{=} 2 \sgn (N_1; f^*\cV_N),
\end{align*}
where $\epsilon$ is as in Theorem \ref{theo:codim2sig}. 
For the last equality, we remark that there is an isomorphism $f^*\cV_{N \times S^1} \cong f^* \cV_N \boxtimes \cV_{S^1}$. 
This is because $f|_{N \times S^1}$ is identified with $f|_N \times \id_{S^1}$ under the trivialization of the normal bundle of $N_1$ pulled back from that of the normal bundle of $N$.
\end{proof}

It remains to prove Theorem \ref{theo:Kubota_expl}. For this, following \cite[Section 3.3]{Kubota}
we construct the flat Hilbert $\cQ_{C^*\Pi}$-module bundle $U$ ad hoc, corresponding automatically to a representation $\rho$, and then check by hand that $U$ and $V$ are isomorphic Hilbert $\cQ_{C^*\Pi}$-module bundles.

For this, first we choose a basepoint $\ast$ in $N \times S^1 \subset M$ and identify the fundamental groups as $\Gamma = \pi_1(M, *)$, $\pi=\pi_1(N \times D^2,\ast)$ and $\Pi=\pi_1(N \times S^1, \ast)$. Let $q\colon (M_\pi,\ast)\to (M,\ast)$ be the covering projection with $\pi_1(M_\pi , \ast)=\pi\subset \Gamma$ and let $N\times D^2\subset M_\pi$ be the corresponding lift of the embedded $N\times D^2\subset M$, namely the connected component of $q^{-1}(N \times D^2)$ containing the basepoint. 
Define $W_\infty:= M_\pi\setminus(N\times D^2)$. 
It is a crucial consequence of the conditions on $\pi_2$, derived in \cite[Theorem 4.3]{HankePapeSchick}, that the inclusion of the boundary $N\times S^1\into W_\infty$ induces an injection $\Pi \to  \pi_1(W_\infty, \ast)$ with a left inverse $r \colon \pi_1(W_\infty, *) \to \Pi$ as in the diagram (\ref{eq:diag}). 
Let $W_\pi:=q^{-1}(W)\to W$ be the
restriction of the covering $M_\pi$ to $W$, a subset of $W_\infty$. Set
$G:=\pi_1(W, \ast)$ and $H:=\pi_1(W_\pi, \ast)$. The covering and inclusion maps give a
commutative diagram of spaces, inducing the one of fundamental groups
\begin{equation}\label{eq:diag}
\begin{split}
\xymatrix{
  N\times S^1\ar@{=}[r]\ar[d]^{\cap}&N \times S^1 \ar[r]^{\subset} \ar[d]^{\cap} & N \times D^2 \ar[d]^{\cap} \\
W_\pi \ar[r]^{\subset}_{i} \ar[d] &W_\infty \ar[r]^{\subset} & M_\pi \ar[d] \\
W \ar[rr]^{\subset }&&M
}\qquad 
\xymatrix{
\Pi\ar@{=}[r]\ar[d]& \Pi=\pi \times \bZ \ar[r] \ar@<0.5ex>[d] & \pi \ar@{=}[d] \\
H \ar[r]^{i_*\quad} \ar[d] &\pi_1(W_\infty ,\ast) \ar[r] \ar@<0.5ex>@{.>}[u]^r & \pi  \ar[d] \\
G \ar[rr] &&\Gamma .
}
\end{split}
\end{equation}
The horizontal maps of groups are surjective, the down vertical ones injective. The
inclusion of $N\times S^1$ into $W_\pi$ induces $\Pi \to H \to G$ (the first a split of $H\to \Pi $ of
\eqref{eq:diag}) and by the van Kampen theorem the normal subgroup $\Lambda :=
\langle \hspace{-0.3ex} \langle \bZ \rangle \hspace{-0.3ex} \rangle $ of $G$
generated by $\integers$ is the kernel of the epimorphism $G\onto
\Gamma$. Because every loop in $W$ which is null-homotopic in $M$ has to lift
to a \emph{loop} in $M_\pi$ and therefore also $W_\pi$, the inclusion $H\to G$
maps the kernel of $H\onto\pi$ isomorphically to this $\ker(G\to\Gamma)$.
Through the epimorphism $H\to \Pi$ we get an induced map
$H\to \cU(\bB_{C^*\Pi}(C^*\Pi))$ (acting by left multiplication). 
The associated Hilbert $C^*\Pi$-module bundle on $W_\pi$ extends to a flat bundle of $W_\infty$ (associated to the canonical map $\pi_1(W_\infty , \ast) \to \bB_{C^*\Pi}(C^*\Pi)$).

Inducing the representation $H \to \Pi \to \cU(C^*\Pi)$ up from $H$ to $G$ we
obtain the unitary representation of $G$ on the space of square-summable
sections of the induced $C^*\Pi$-bundle 
\[ \cX:= G \times _H C^*\Pi \cong \disjointunion _{gH \in G/H} C^*\Pi \]
on the discrete space $G/H \cong \Gamma / \Pi$, where the action is by left
multiplication. We denote it by
\[ \label{eq:rho_G}
  \rho _G \colon G\to \cU (\bB_{C^*\Pi}(\ell^2(G/H, \cX) )). \]
The above isomorphism $\cX \cong \disjointunion_{G/H} C^*\Pi$ means that $\ell^2(G/H, \cX)$ is the completion of an
(infinite) algebraic direct sum of free Hilbert $C^*\Pi$-modules of rank one,
on which $G$ acts via a combination of permutations and left
$\Pi$-multiplication. The corresponding flat Hilbert $C^*\Pi$-module bundle 
\[ \mathcal{H}_W:= \tilde{W} \times _{G} \ell^2(G/H, \cX)\]
on $W$ is the pushdown of the bundle $\tilde{W} \times _H C^*\Pi$ on $W_\pi$: the fiber over $x\in W$ is the completed direct sum of all the fibers in the inverse image of $x$ in $W_\pi$.

\begin{lemma}\label{lem:rep_on_Rand}
  Restricted to $\Pi$, the representation $\rho_\Pi:=\rho_G|_\Pi$ decomposes as a direct sum
  \begin{equation}\label{eq:rho_Pi}
  \rho_{\Pi}=\lambda_\Pi \oplus \rho_{\mathrm{rest}} \colon \Pi \to \cU(\underbrace{\bB_{C^*\Pi}(C^*\Pi)}_{C^*\Pi} \oplus
  \bB_{C^*\Pi}(\ell^2((G \setminus H)/H , \cX)))
  \end{equation}
  where the map $\lambda_\Pi$ to the first summand comes from left multiplication. Moreover, the map $\rho_{\mathrm{rest}}$ to the second summand factors through the
  projection $\pr_\pi \colon \Pi=\pi\times \integers\to \pi$, i.e.~is written as
  $\rho_\pi \circ \pr_\pi$ with 
  \[ \rho_\pi:= \rho_G|_{\pi} \colon \pi \to \cU(\bB_{C^*\Pi}(\ell^2((G \setminus H)/H , \cX))).\] 
Correspondingly, the restriction of $\mathcal{H}_W$ to $N\times S^1$ is the direct sum of flat bundles $\Nu_{N\times S^1}$ and 
  \[\mathcal{H}_{\mathrm{rest}}:=\widetilde{N \times S^1} \times _\Pi \ell^2((G \setminus H)/H , \cX), \]
 where  $\mathcal{H}_{\mathrm{rest}}$ extends to a flat bundle over $N\times D^2$.
\end{lemma}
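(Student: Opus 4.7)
My plan is to prove the decomposition geometrically, from which the representation-theoretic claim follows automatically. The bundle $\mathcal{H}_W$ is by construction the pushforward along the covering $q\colon W_\pi\to W$ of the flat Hilbert $C^*\Pi$-module bundle $\cE:=\widetilde W\times_H C^*\Pi$ on $W_\pi$ determined by $\sigma:=\lambda_\Pi\circ r\circ i_*$. The key structural observation I would exploit is that, because $\sigma$ factors through $i_*$, this bundle extends canonically to a flat Hilbert $C^*\Pi$-module bundle $\cE_\infty$ on $W_\infty$, associated to the representation $\lambda_\Pi\circ r\colon \pi_1(W_\infty)\to \cU(C^*\Pi)$.

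Over $x\in N\times S^1\subset W$, the fiber of $\mathcal{H}_W$ is the completed direct sum of copies of $C^*\Pi$ indexed by $q^{-1}(x)\cap W_\pi$, and this sum splits according to the connected components of $\partial W_\pi=q^{-1}(N\times S^1)\cap W_\pi$. Each such component is the boundary $B_\alpha$ of a connected component $D_\alpha$ of $q^{-1}(N\times D^2)\subset M_\pi$. There is the distinguished base lift $D_0\cong N\times D^2$ with $B_0\cong N\times S^1$ (via $q$), and the other lifts $D_\alpha$ for $\alpha\ge 1$, which lie in $W_\infty$ but not in $W_\pi$, while their boundaries $B_\alpha$ do lie in $W_\pi$. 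For the base component, $q|_{B_0}$ is a homeomorphism, so the $B_0$-summand of $\mathcal{H}_W|_{N\times S^1}$ has holonomy equal to $\sigma|_\Pi$; by the splitting property of $r$ in diagram~\eqref{eq:diag}, the composition $\Pi\hookrightarrow H\xrightarrow{\sigma}\Pi$ is the identity, hence this summand is exactly $\Nu_{N\times S^1}$.

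For the non-base components, I would observe that since each $D_\alpha$ lies entirely inside $W_\infty$, the flat bundle $\cE_\infty$ restricts to $D_\alpha$, and because $q$ restricts to a covering $D_\alpha\to N\times D^2$, one may push $\cE_\infty|_{D_\alpha}$ down along this covering to obtain a flat Hilbert $C^*\Pi$-module bundle $\cF_\alpha$ on $N\times D^2$. The completed direct sum $\bigoplus_{\alpha\ge 1}\cF_\alpha$ is then a flat bundle on $N\times D^2$ whose restriction to the boundary agrees, by a direct fiber-by-fiber comparison, with the complement of the $\Nu_{N\times S^1}$-summand inside $\mathcal{H}_W|_{N\times S^1}$. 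This boundary restriction is, by definition, $\mathcal{H}_{\mathrm{rest}}$, and since it extends flatly to $N\times D^2$ whose fundamental group is $\pi$, the associated holonomy $\rho_{\mathrm{rest}}$ automatically factors through $\pr_\pi\colon \Pi\to\pi$ as $\rho_\pi\circ\pr_\pi$.

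I expect the main obstacle to be the careful fiber-by-fiber verification that this geometric decomposition along the components of $q^{-1}(N\times S^1)\cap W_\pi$ matches the algebraic decomposition one would otherwise obtain by applying Mackey's induction-restriction formula to $\rho_\Pi=\Ind_H^G(\sigma)|_\Pi$: the orbits of $\Pi$ on $G/H$ (equivalently, the double cosets $\Pi\backslash G/H$) must be identified with the set of components $\{D_\alpha\}$ of $q^{-1}(N\times D^2)$, and the base orbit $\{eH\}$, whose stabilizer is all of $\Pi$, must be shown to correspond precisely to $D_0$ without any over- or undercount. Once this bookkeeping is pinned down, both the bundle decomposition and the matching decomposition of representations are immediate.
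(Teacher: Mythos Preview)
Your argument is correct and rests on the same geometric fact the paper uses: the non-base components $D_\alpha$ of $q^{-1}(N\times D^2)$ lie entirely inside $W_\infty$, so that the flat bundle $\cE_\infty$ lives over them. The difference is purely one of packaging. You work on the bundle level throughout: you decompose the pushforward $\mathcal{H}_W|_{N\times S^1}$ by components $B_\alpha$ of $q^{-1}(N\times S^1)$, then directly \emph{construct} the flat extension of $\mathcal{H}_{\mathrm{rest}}$ over $N\times D^2$ as $\bigoplus_{\alpha\ge1}q_*(\cE_\infty|_{D_\alpha})$, from which the factorization of $\rho_{\mathrm{rest}}$ through $\pi$ is automatic. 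The paper instead works on the representation level: it fixes $g\in G\setminus H$, observes that $\rho_\Pi(t)$ preserves the summand $gC^*\Pi$ and acts there via $\rho_H(g^{-1}tg)$, and then shows $i_*(g^{-1}tg)=e$ in $\pi_1(W_\infty,\ast)$ by precisely your observation that the lift of $t$ lands in the boundary of some $D_\alpha\subset W_\infty$ and is therefore contractible there. Your approach has the advantage of producing the extension over $N\times D^2$ explicitly and in one stroke; the paper's has the advantage of isolating exactly which algebraic identity ($i_*(g^{-1}tg)=e$) is responsible. The ``obstacle'' you flag --- matching the component decomposition with the orbit decomposition from Mackey --- is harmless: the components of $q^{-1}(N\times S^1)\cap W_\pi$ are in natural bijection with $\Pi\backslash G/H$, and the base component corresponds to the trivial double coset, so no bookkeeping problem arises.
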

\begin{proof}
Let $r$ denote the homomorphism as in (\ref{eq:diag}), let $i \colon W_\pi \to W_\infty$ denote the inclusion and let $\rho_H$ denote the restriction $\rho_G|_H$. 
The left multiplication by $H$ on $G$ preserves $H$ and $G \setminus H$, hence $\rho_H$ is decomposed as (\ref{eq:rho_Pi}). 
As $\Pi\to H \to \Pi$ is the identity, the resulting action on the summand $H\times_H C^*\Pi=C^*\Pi$ is given by left multiplication, i.e.~$\rho_H(h)|_{C^*\Pi} = (\lambda_\Pi \circ r \circ i_*)(h)$ for any $h \in H$.

We write $t$ for the generator of $\bZ \subset H \subset G$. Then
$\rho_\Pi(t)$ preserves each rank one direct summand $gC^*\Pi := \ell^2(\{
gH\}, gH \times _H C^*\Pi)$ of $\cH_{\mathrm{rest}}$ for any $gH \in
(G\setminus H)/H$, since $t \cdot gH = g \cdot g^{-1}tg H $ and $g^{-1}tg \in
H$. This observation also shows that $\rho_\Pi(t)|_{gC^*\Pi}$ is unitary
equivalent to $\rho_H(g^{-1}tg)|_{C^*\Pi}$. Now we show that $i_*(g^{-1}tg)=e$
for any $g \in G \setminus H$, which concludes that $\rho_\Pi(t)$ acts on
$\cH_{\mathrm{rest}}$ trivially, as $\rho_H$ factors through $i_*\colon H\to
\pi_1(W_\infty,*)$ by construction.

The element $g^{-1}tg\in H=\pi_1(W_\pi)$ is represented by the concatenation of the lift of the loop $g$ to a (non-closed, as $g\notin H$) path $\gamma$ in $W_\pi$ from the base point $\ast$, the corresponding lift of the loop $t\in S^1$ and the inverse of the path $\gamma$. We have to show that it is contractible in $W_\infty$. However, $\gamma$ ends in a different component of the inverse image of $N\times S^1$ under the covering projection $W_\pi\to W$. In $W_\infty$, this component is the boundary of a covering of $N\times D^2$ and therefore the lift of $t$ is contractible in $W_\infty$ and hence also $i_*(g^{-1}tg)=e \in \pi_1(W_\infty, \ast)$.
\end{proof}

\begin{definition}
  Define $\rho\colon \Gamma=G/\langle \hspace{-0.5ex}\langle\integers\rangle  \hspace{-0.5ex} \rangle \to \cU(\cQ_{C^*\Pi}) $ of Theorem \ref{theo:Kubota_expl}
  as induced by the composition
  \begin{equation}\label{eq:rhoGbar}
 \bar{\rho}_G \colon   G\xrightarrow{\rho_G} \bB_{C^*\Pi}(\ell^2(G/H, \cX)) \to
    \cQ_{C^*\Pi}(\ell^2(G/H, \cX) ) \cong \cQ_{C^*\Pi}, 
  \end{equation}
  using that the kernel $\Lambda$ of $G\to \Gamma$, normally generated by $\integers$,
  acts by Lemma \ref{lem:rep_on_Rand} as the identity on $\ell^2((G \setminus H)/H, \cX)$ and therefore as the identity in $\cQ_{C^*\Pi}$. 
\end{definition}

\begin{remark}\label{rem:bdl}
For the proof of Theorem \ref{theo:Kubota_expl}, we remark a relation between $\sH_{C^*\Pi}$-bundles and $\bB_{C^*\Pi}$-bundles. 
Let $\cH$ be a locally trivial bundle of Hilbert $C^*\Pi$-modules whose
typical fiber is $\sH_{C^*\Pi}$ and with structure group $\cU(\sH_{C^*\Pi})$
with norm topology. We write $\cU(\cH)$ for corresponding right principal $\cU(\sH_{C^*\Pi})$-bundle, with $\cU(\cH)_x = \cU(\sH_{C^*\Pi},\cH_x)$ for $x\in X$ (note that, in the usual convension of the product of endomorphisms, the products $(a,x ) \mapsto ax$ and $(x,b) \mapsto xb$ induce a left $\bB_{C^*\Pi}(\sH)$-action and a right $\bB_{C^*\Pi}(\sK)$-action on $ \bB_{C^*\Pi}(\sK, \sH)$). 
% ---this is right principal for the usual convention of the product of an endomorphism ring\comts{changed and added}). 
We have the associated bundles $\cB(\cH):= \cU(\cH) \times _{\cU (\sH_{C^*\Pi})}\bB_{C^*\Pi}$ and $\cQ(\cH):= \cU(\cH) \times _{\cU(\sH_{C^*\Pi})}\cQ_{C^*\Pi}$. Then the following are easily verified:
\begin{enumerate}
\item For the group $G=\pi_1(X)$ and its unitary representation $\rho \colon G \to \bB_{C^*\Pi}$, the associated bundle $\tilde{X} \times _G \bB_{C^*\Pi}$ is isomorphic to $\cB(\tilde{X} \times _G \sH_{C^*\Pi})$.
\item The fiber of the bundle $\cB(\cH)$ at $x \in X$ is $\bB_{C^*\Pi}(
  \sH_{C^*\Pi},\cH_x)$. Hence, a bounded bundle map $T \colon \cH \to \cK$ induces $
  \cB(\cH) \to \cB(\cK)$, fiberwise given by postcomposition. Similarly, $T$
  also induces $\cQ(\cH) \to \cQ(\cK)$.
\item If $\cH$ and $\cK$ are trivial, $T \colon X \times
  \sH_{C^*\Pi} \to X \times \sH_{C^*\Pi}$ is identified with a continuous
  function $X \to \bB_{C^*\Pi}$. The bundle map on $ \cB(X \times
  \sH_{C^*\Pi})  \cong X \times \bB_{C^*\Pi}$ (or $\cQ(X \times \sH_{C^*\Pi})
  \cong X \times \cQ_{C^*\Pi}$, respectively) induced from $T$ is the multiplication of $T$ from the left.
\end{enumerate}
\end{remark}

\begin{proof}[Proof of Theorem \ref{theo:Kubota_expl}]
We now prove that $V$ is isomorphic to $U$ by showing that the restrictions of $U$ to $W$ and to $N\times D^2$ both can be trivialized, and that the
change of trivialization over $N\times S^1$ is precisely the gluing map
which produces $V$.

For this, note that the restriction of $U$ to $W$ is the flat bundle associated to the representation $\overline{\rho_G}$ of \eqref{eq:rhoGbar}. As such, it is identified with $\cQ(\cH_W)$ by Remark \ref{rem:bdl} (1). 
We consider the flat bundle $\cQ(\cH_{\mathrm{rest}}) \cong \widetilde{N \times D^2} \times _{\rho_{\mathrm{rest}}} \cQ_{C^*\Pi}$ over $N \times D^2$ and glue them by the bundle map $\cQ(\cH_W)|_{N \times S^1} \to \cQ(\cH_{\mathrm{rest}})|_{N \times S^1}$ induced from the second projection
\[ w \colon \cH_{W}|_{N \times S^1} \cong \cV_{N \times S^1} \oplus \cH_{\mathrm{rest}} \to \cH_{\mathrm{rest}} \]
as is discussed in Remark \ref{rem:bdl} (2), to get the bundle $U'$. Since $w$
is a lift of the $\Pi$-invariant projection $\ell^2(G/H, \cX) \to
\ell^2((G\setminus H)/H, \cX)$ to the associated bundles, the flat
connection on $U|_W$ extends to $U'$ and its monodromy
representation coincides with $\bar{\rho}_G$. This means that $U \cong U'$.

\if0

Since the diagram 
\[\xymatrix{
G \ar[r]^{\rho_G \hspace{3em}} & \bB_{C^*\Pi}(\ell^2(G/H, \cX)) \ar[d]^{\mathop{\mathrm{Ad}} w} \\
\pi \ar[r]^{\rho_{\pi}  \hspace{5em}} \ar[u]^{\cup} & \bB_{C^*\Pi}(\ell^2((G\setminus H)/H, \cX)) \
}
\]
commutes (where $w \colon \ell^2(G/H, \cX) \to \ell^2((G \setminus H)/H,\cX)$ denote the canonical projection),

. By Lemma \ref{lem:rep_on_Rand}, $\ker w$ is of rank $1$ and $w$ intertwines the representations as $w\rho_G(g)w^*=\rho_\pi(g)$ for any $g \in \pi$. Hence the diagram
(i.e.~$\mathop{\mathrm{Ad}}w$ is $\pi$-equivariant) and the right vertical map is an isomorphism. That is, $\mathop{\mathrm{Ad}}w $ induces an isomorphism 
\[U|_{N \times D^2} = \widetilde{N \times D^2} \times _{\bar{\rho}_G|_\pi} \cQ_{C^*\Pi} \to \cQ(\cH_{\mathrm{rest}}) \]
with .

Hence $w$ associates the flat bundle homomorphism
\[ \tilde{w} \colon U|_{N \times D^2} \to \tilde{U}_{\mathrm{rest}}:=\widetilde{N \times D^2} \times _{\rho_{\pi}} \bB_{C^*\Pi}(\ell^2(G\setminus H)/H, \cX).\]

The composition
\[\xymatrix{
\cQ(\cH_W|_{N \times S^1}) \to U|_{N \times S^1} \to \cQ(\cH_{\mathrm{rest}}|_{N \times S^1}) }
\]
is 

obtained from the flat Hilbert $\bB_{C^*\Pi}(\ell^2(G/H, \cX)) \iso \bB_{C^*\Pi}$-module bundle $\tilde{U}$ associated to $\rho_G$ by the exterior tensor product with the quotient $\bB_{C^*\Pi} \to \cQ_{C^*\Pi}$. 
Here we identify $\tilde{U}$ with the bundle $\cB(\cH_W)$ of Banach spaces whose fiber at $x \in W$ is $\bB_{C^*\Pi}((\cH_W)_x, \sH_{C^*\Pi})$, which obviously has the same translation function as $\tilde{U}$. 
By Kuiper's theorem there is a trivialization $v_W \colon W \times \bB_{C^*\Pi} \to \tilde{U}$, which  also trivializes $\cB(\cH_W)$ by the left multiplication. A unitary $u \colon \cH \to \cK$ induces

On the other hand, the inclusion of representations $\sH_{\mathrm{rest}} \subset \ell^2(G/H, \cH)$ of $G$ induces the inclusion of C*-algebras $\bB(\sH_{\mathrm{rest}}) \to \bB(\ell^2(G/H, \cX))$ and also the $\ast$-isomorphism $\cQ(\sH_{\mathrm{rest}}) \to \cQ(\ell^2(G/H, \cX))$, as $\sH_{\mathrm{rest}} \oplus C^*\Pi \cong \ell^2(G/H, \cX)$. Again by Kuiper's theorem, we get a trivialization $w \colon (N \times D^2) \times \bB_{C^*\Pi} \to \widetilde{N \times D^2} \times _{G} \bB_{C^*\Pi}(\sH_{\mathrm{rest}})$. 
The composition $(\id_{\widetilde{N \times D^2}} \times _G i) \circ w$ induces a trivialization of $U|_{N \times D^2}$.

Now, $U$ is given by gluing two trivial bundles on $W$ and $N \times D^2$ on $N \times S^1$ by the Fredholm map $w^* \circ F^* \circ v$, whose kernel is isomorphic to $\cV$ and cokernel is $0$. 
\fi

By Kuiper's theorem, there are trivializations $v_W \colon W \times \sH_{C^*\Pi} \to \cH_W$ and $v_{N \times D^2} \colon N \times D^2 \times \sH_{C^*\Pi} \to \cH_{\mathrm{rest}}$. Then $U'$ is obtained by gluing two trivial $\cQ_{C^*\Pi}$-bundles along $N \times S^1$ by the bundle map induced from 
\[ v_{N \times D^2}^*wv_W \colon N \times S^1 \times \sH_{C^*\Pi} \to N \times S^1 \times \sH_{C^*\Pi}.\] 
This is a bundle map whose kernel
bundle is precisely $\Nu_{N\times S^1}$ and with trivial cokernels. By Remark \ref{rem:bdl} (3), this is precisely the gluing map for $V$, consequently $U \iso U' \iso V$ as Hilbert $\cQ_{C^*\Pi}$-module bundles. This finishes the proof of Theorem
\ref{theo:Kubota_expl} and therefore, in view of Corollary
\ref{corol:main_corol} also our two main results, Theorem \ref{theo:Kubota}
and Theorem \ref{theo:codim2sig}.
\end{proof}

\begin{remark}
  The Mayer-Vietoris boundary map $\delta_{MV}\colon \rmK^1(N\times
  S^1;\cQ_{C^*\Pi})\to \rmK^0(M;\cQ_{C^*\Pi})$ of \eqref{eq:MV_cohom} is not
  injective. In particular, $[V]=\delta_{MV}(\delta_{\cQ}^{-1}([\Nu_{N\times
    S^1}]-[\underline{C^*\Pi}]))$ is also associated to of the Mishchenko
  bundle over $N\times S^1$ and the trivial rank $1$ Hilbert $C^*\Pi$-module
  bundle over $N\times S^1$. The latter is used in some calculations of  \cite[Section 3.3]{Kubota}.
\end{remark}

\begin{proposition}
We have the following strengthening of Theorem \ref{theo:Kubota}: In the
situation of Theorem \ref{theo:Kubota}, the Rosenberg index $\alpha(M)$ is not
contained in the image of the map $\rmK_*(C^*_{\mathrm{max}}\pi_1(N)) \to
\rmK_*(C^*_{\mathrm{max}}\pi_1(M))$. This follows from Theorem \ref{theo:Kubota_expl}. Indeed, the composition $\rho \colon C^*\pi \to C^*\Gamma \to \cQ_{C^*\Pi}$ induces the zero map in $\rmK$-theory since the diagram
\[
\xymatrix{
C^*\pi \ar[r] \ar[d] & \bB_{C^*\Pi} \ar[d] \\
C^*\Gamma \ar[r] & \cQ_{C^*\Pi}
} 
 \]
 commutes and $\rmK_*(\bB_{C^*\Pi})=0$. Note that this strengthening is also a
 consequence of \cite{Kubota}*{Theorem 3.7}.
\end{proposition}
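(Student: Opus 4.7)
My plan is to derive the non-membership of $\alpha(M)$ in the image of $\iota_*\colon \rmK_*(C^*\pi)\to \rmK_*(C^*\Gamma)$ by exhibiting a homomorphism out of $\rmK_*(C^*\Gamma)$ which detects $\alpha(M)$ but vanishes on the image of $\iota_*$. The natural candidate is $\rho_*\colon \rmK_*(C^*\Gamma)\to \rmK_*(\cQ_{C^*\Pi})$ induced by the representation $\rho$ of Theorem \ref{theo:Kubota_expl}. Indeed, by Corollary \ref{corol:main_corol} we have $\rho_{M,N}(\alpha(M))=\alpha(N)$ and $\rho_{M,N}=\beta\circ\delta_\cQ\circ\rho_*$, so under the hypothesis $\alpha(N)\ne 0$ of Theorem \ref{theo:Kubota} we automatically get $\rho_*(\alpha(M))\ne 0$. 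It therefore suffices to prove $\rho_*\circ\iota_*=0$.

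The strategy for this vanishing is to factor $\rho\circ\iota\colon C^*\pi\to \cQ_{C^*\Pi}$ through $\bB_{C^*\Pi}$. Since $\bB_{C^*\Pi}$ is the algebra of adjointable operators on the standard Hilbert $C^*\Pi$-module and hence has vanishing K-theory, any such factorization forces the composition $\rmK_*(C^*\pi)\to \rmK_*(\cQ_{C^*\Pi})$ to be zero. So the entire proof reduces to producing a commutative square
\[
\xymatrix{
C^*\pi \ar[r] \ar[d]_{\iota_*} & \bB_{C^*\Pi} \ar[d] \\
C^*\Gamma \ar[r]^{\rho} & \cQ_{C^*\Pi},
}
\]
i.e.\ to lifting the restriction of $\rho$ to $\pi$ from the Calkin algebra back to $\bB_{C^*\Pi}$.

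Such a lift is essentially free once one unwinds the construction of $\rho$. Using the diagram \eqref{eq:diag}, the inclusion $\iota$ factors as $\pi\hookrightarrow \Pi\to G\twoheadrightarrow \Gamma$, and $\rho$ is the descent to $\Gamma$ of the composition $\bar{\rho}_G$ of $\rho_G$ with $\bB_{C^*\Pi}\to \cQ_{C^*\Pi}$. Hence $\rho\circ\iota$ agrees on the group level with $\rho_G|_\pi$ followed by the quotient. By Lemma \ref{lem:rep_on_Rand}, $\rho_G|_\Pi$ splits as $\lambda_\Pi\oplus(\rho_\pi\circ\pr_\pi)$, and since $\pr_\pi|_{\pi}=\id_\pi$, its restriction to $\pi$ is the honest unitary representation $\lambda_\Pi|_\pi\oplus\rho_\pi$ into $\cU(\bB_{C^*\Pi})$. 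This provides the required lift.

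I do not foresee a serious obstacle: everything reduces to verifying that the copy of $\pi$ inside $G$ given by $\pi\subset\Pi=\pi\times\bZ$ is the same as the one used to form $\iota$, which is a direct consequence of the splitting $r$ in \eqref{eq:diag}, together with the stability assertion $\rmK_*(\bB_{C^*\Pi})=0$. The main conceptual point worth emphasizing is simply that the obstacle to lifting $\rho$ itself to $\bB_{C^*\Pi}$ lives entirely in the $\bZ$-direction of $\Pi$, which is invisible after restricting to $\pi$.
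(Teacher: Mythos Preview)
Your proposal is correct and follows exactly the approach indicated in the paper's statement: factor $\rho\circ\iota$ through $\bB_{C^*\Pi}$ via $\rho_G|_\pi$ and use $\rmK_*(\bB_{C^*\Pi})=0$. The invocation of Lemma~\ref{lem:rep_on_Rand} is not actually needed for the lift---$\rho_G\colon G\to\cU(\bB_{C^*\Pi})$ already takes values in $\bB_{C^*\Pi}$ by construction, so its restriction to $\pi\subset G$ directly gives the top arrow of the square---but it does no harm.
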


\if0
\section{The reduced version of Theorem \ref{theo:Kubota}}

Here, we discuss a version of Theorem \ref{theo:Kubota_expl} for reduced group
$C^*$-algebras. It is based on a calculation of certain
$C^*$-completions due to Alcidis Buss, Siegfried Echterhoff, Timo Siebenand,
and Rufus Willett which he presents in an
appendix.

The main consequence are versions for reduced $C^*$-algebras of Theorem
\ref{theo:Kubota} and Theorem \ref{theo:codim2sig}. We write
$\alpha_{\mathrm{red}}(M)$ and $\sgn (M,\cV_{\mathrm{red}})$ for the reduced
Rosenberg index, i.e.~the Mishchenko-Fomenko index for the Dirac operator
twisted by $\cV_{M, \mathrm{red}}:=\tilde{M} \times _{\Gamma}
C^*_{\mathrm{red}}\Gamma$.

For a discrete group $\Sigma $ (we bear $\Sigma= \Gamma, \pi, \Pi$ in our mind), we write $\epsilon_\Sigma $ for the quotient $C^*_{\mathrm{max}}\Sigma \to C^*_{\mathrm{red}}\Sigma $. It induces $\ast$-homomorphisms $\bK_{C^*\Sigma} \to \bK_{C^*_{\mathrm{red}} \Sigma}$, $\bB_{C^*\Sigma} \to \bB_{C^*_{\mathrm{red}} \Sigma}$ and $\cQ_{C^*\Sigma} \to \cQ_{C^*_{\mathrm{red}} \Sigma}$, which are also denoted by the same symbol $\epsilon_\Sigma$.

\begin{theorem}
Let $M$, $N$ be as in Theorem \ref{theo:Kubota}. Then there is a group
homomorphism $\rho_{M,N}^{\mathrm{red}} \colon
\rmK_*(C^*_{\mathrm{red}}\Gamma) \to \rmK_{*-2}(C^*_{\mathrm{red}}\pi)$ making
the following diagram commutative
\begin{equation}\label{main-red-diagram}
  \begin{CD}
    \rmK_*(C^*\Gamma) @>{\rho_{M,N}}>>
    \rmK_{*-2}(C^*\pi)\\
    @VV{(\epsilon_\Gamma)_* }V @VV{(\epsilon_\pi)_* }V\\
    \rmK_*(C^*_{\mathrm{red}}\Gamma) @>{\rho_{M,N}^{\mathrm{red}}}>> \rmK_{*-2}(C^*_{\mathrm{red}}\pi).
  \end{CD}
\end{equation}
In particular, $\rho_{M,N}^{\mathrm{red}}$ 
maps the reduced Rosenberg index of $M$ to the one of
$N$, and the reduced analytic higher signature class of $M$ to twice the one of
$N$. 
\end{theorem}

As a consequence,  $\alpha_{\mathrm{red}}(N) \neq 0$ implies that $\alpha_{\mathrm{red}}(M) \neq 0$. Also, for a degree $1$ map $f \colon M_1 \to M$ as in Theorem \ref{theo:codim2sig}, $\sgn (M ; \cV_{M, \mathrm{red}} )=\sgn (M_1 ; f^*\cV_{M, \mathrm{red}})$ implies $2\sgn (N , \cV_{N, \mathrm{red}})=2 \sgn (N_1, f^*\cV_{N, \mathrm{red}})$.  

\begin{proof}
The fact that $(\epsilon_\Gamma)_*(\alpha(M))=\alpha_{\mathrm{red}}(M)$ is
well known, and similarly for $N$ and the $C^*$-algebraic signatures. We only
have to obtain the commutative diagram \ref{main-red-diagram}.

Recall that by definition $\rho_{M,N}=\beta \circ \delta_{\cQ} \circ
\rho_{*}$, where $\Pi=\pi\times \integers$, $\delta_{\cQ}$ is the boundary homomorphism of the exact sequence $0 \to
\bK_{C^*\Pi} \to \bB_{C^*\Pi} \to
\cQ_{C^*\Pi} \to 0$, and where $\beta\colon K_{*-1}(C^*\Pi)\to
K_*( C^*\pi)$ is the K\"unneth homomorphism, using the canonical isomorphism
$C^*\Pi\iso C^*\pi\tensor C^*\integers$. We also have reduced version
$\delta_{\cQ,\mathrm{red}}$ and $\beta_{\mathrm{red}}$ and the constructions
are natural, meaning that the middle and right square of the following diagram
commute.

The theorem therefore follows by constructing $\rho_{\mathrm{red},*}$ in the
following diagram such that also the left square commutes:
\[
\xymatrix{
\rmK_*(C^*_{\mathrm{max}}\Gamma ) \ar[r]^{\rho_*} \ar[d]^{(\epsilon_\Gamma)_* } & \rmK_{*}(\cQ_{C^*\Pi}) \ar[r]^{\delta_{\cQ}} \ar[d]^{\epsilon_\Pi} & \rmK_{*-1}(C^*_{\mathrm{max}}\Pi ) \ar[d]^{(\epsilon_\Pi)_*} \ar[r]^\beta & \rmK_{*-2}(C^*_{\mathrm{max}}\pi ) \ar[d]^{(\epsilon_\pi)_*} \\
\rmK_*(C^*_{\mathrm{red}}\Gamma ) \ar[r]^{{\rho}_{\mathrm{red},*}} & \rmK_*(\cQ_{C^*_{\mathrm{red}}\Pi}) \ar[r]^{\delta_{\cQ, \mathrm{red}}} & \rmK_{*-1}(C^*_{\mathrm{red}}\Pi) \ar[r]^{\beta_{\mathrm{red}}} & \rmK_{*-2}(C^*_{\mathrm{red}}\pi ) 
}
\]
This is a direct consequence of the following Lemma \ref{theo:red}.
\end{proof}

\begin{lemma}\label{theo:red}
In the situation of Section \ref{sec:proof} there is a homomorphism
\[ \rho_{\mathrm{red}} \colon C^*_{\mathrm{red}}\Gamma \to \cQ_{C^*_{\mathrm{red}}\Pi}\]
such that the following diagram commutes
\begin{equation}
\begin{split}\label{diag:red}
\xymatrix{
C^*\Gamma \ar[r]^{\rho} \ar[d]^{\epsilon_\Gamma } & \cQ_{C^*\Pi} \ar[d]^{\epsilon_\Pi } \\
C^*_{\mathrm{red}}\Gamma \ar[r]^{\rho_{\mathrm{red}}} & \cQ_{C^*_{\mathrm{red}}\Pi}
}.
\end{split}
\end{equation}
%commutes, where $\rho$ is the homomorphism as in (\ref{eq:rhoGbar}). 
\end{lemma}
\begin{proof}

  Recall that the starting point for the construction of $\rho$ is the
  representation $\rho_G\colon G\to  \cU (\bB_{C^*\Pi}(\ell^2(G/H, \cX) ))$ of
  \eqref{eq:rho_G}. The corresponding reduced representation
  $\rho_G^{\mathrm{red}} \colon G\to
    \cU(\bB_{C^*_{\mathrm{red}}\Pi}(\ell^2(G/H,\cX)\tensor_{C^*\Pi}C^*_{\mathrm{red}}\Pi))$
    can be realized spacially via the embedding
    \begin{equation*}
      \bB_{C^*_{\mathrm{red}}\Pi}(\ell^2(G/H,\cX)\tensor_{C^*\Pi}C^*_{\mathrm{red}}\Pi)\to
      \bB_{\complexs}
      (\ell^2(G/H,\cX)\tensor_{C^*\Pi}% C^*_{\mathrm{red}}\Pi\tensor_{C^*_{\mathrm{red}}\Pi}
        \ell^2\Pi);
      T\mapsto T\tensor 1.
    \end{equation*}
Now observe that canonically as $G$-representations
\begin{equation*}
\ell^2(G/H,\cX)\tensor_{C^*\Pi}\ell^2\Pi=\ell^2(G\times_H\Pi)=\ell^2(G/K)=\Ind_\Lambda^G
\ell^2(\Lambda/K).
\end{equation*}
The next step is the composition with the quotient map to obtain
\begin{equation*}
  \bar\rho^{\mathrm{red}}_G\colon G\to
  \cB_{C_{\mathrm{red}}^*\Pi}(\ell^2(G/H,\cX)\tensor_{C^*\Pi}C^*_{\mathrm{red}}\Pi)
  \to
  \cQ_{C^*_{\mathrm{red}}\Pi}(\ell^2(G/H,\cX)\tensor_{C^*\Pi}C^*_{\mathrm{red}}\Pi)
\end{equation*}
Being a quotient of $\rho^{\mathrm{red}}_G$, the corresponding Hilbert space
representation is weakly contained in the one for $\rho_G^{\mathrm{red}}$,
i.e.~is weakly contained in a representation of $G$ induced from
$\Lambda$. Moreover, exactly as for $\rho$, its restriction to $\Lambda$ is
trivial. By Lemma \ref{lem:red} and using that $\Gamma=G/\Lambda$ this therefore induces the desired homomorphism
\begin{equation*}
  \rho_{\mathrm{red}}\colon C^*_{\mathrm{red}}\Gamma \to
  \cQ_{C^*_{\mathrm{red}}\Pi}=
  \cQ_{C^*_{\mathrm{red}}\Pi}(\ell^2(G/H,\cX)\tensor_{C^*\Pi} C^*_{\mathrm{red}}\Pi).
\end{equation*}
By construction, it fits into the desired diagram \eqref{diag:red}.
\end{proof}

\begin{appendix}
  \section{Appendix by Alcidis Buss, Siegfried Echterhoff, Timo Siebenand,
and    Rufus Willett}

\begin{lemma}\label{lem:red}
  Let $G$ be a discrete group with normal subgroup $\Lambda$. Let $\rho\colon
  G\to U(H)$ be a representation which is weakly contained in an induced
  representation $\ind_\Lambda^G(V)$. Assume that $\rho|_\Lambda$ is
  trivial. Then $\rho$ extends to a $C^*$-algebra homomorphism
  \begin{equation*}
    \rho\colon C^*_{\mathrm{red}}G/\Lambda \to B(H).
  \end{equation*}
\end{lemma}
\begin{proof}
  Note that every representation of $G$ induced up from $\Lambda$ is weakly
  contained in the induction to $G$ of the universal representation of
  $\Lambda$, therefore we can assume that $V$ is a universal representation of
  $\Lambda$. 
\end{proof}

\end{appendix}
\fi

\begin{bibdiv}
  \begin{biblist}
% \bib{Echterhoff}{article}{
%     author = {Echterhoff, Siegfried},
%      title = {Crossed products and the Mackey-Rieffel-Green machine}, 
%  booktitle = {in {$K$}-theory for group {$C^*$}-algebras and semigroup
%               {$C^*$}-algebras},
%     editor = {Cuntz, Joachim and Echterhoff, Siegfried and Li, Xin and Yu,
%               Guoliang},
%     series = {Oberwolfach Seminars},
%     volume = {47},
%  publisher = {Birkh\"{a}user/Springer, Cham},
%       year = {2017},
%      pages = {pp 5-79},
%       isbn = {978-3-319-59914-4; 978-3-319-59915-1},
% }

\bib{Engel}{article}{
   author={Engel, Alexander},
   title={Wrong way maps in uniformly finite homology and homology of
   groups},
   journal={J. Homotopy Relat. Struct.},
   volume={13},
   date={2018},
   number={2},
   pages={423--441},
   issn={2193-8407},
   review={\MR{3802801}},
   doi={10.1007/s40062-017-0187-x},
}

    \bib{GromovLawson}{article}{
   author={Gromov, Mikhael},
   author={Lawson, H. Blaine, Jr.},
   title={Positive scalar curvature and the Dirac operator on complete
   Riemannian manifolds},
   journal={Inst. Hautes \'Etudes Sci. Publ. Math.},
   number={58},
   date={1983}
   pages={83--196 (1984)},
   issn={0073-8301},
   review={\MR{720933 (85g:58082)}},
}    

\bib{HankePapeSchick}{article}{
   author={Hanke, Bernhard},
   author={Pape, Daniel},
   author={Schick, Thomas},
   title={Codimension two index obstructions to positive scalar curvature},
   language={English, with English and French summaries},
   journal={Ann. Inst. Fourier (Grenoble)},
   volume={65},
   date={2015},
   number={6},
   pages={2681--2710},
   issn={0373-0956},
   review={\MR{3449594}},
}

\bib{HankeSchick}{article}{
   author={Hanke, Bernhard},
   author={Schick, Thomas},
   title={Enlargeability and index theory},
   journal={J. Differential Geom.},
   volume={74},
   date={2006},
   number={2},
   pages={293--320},
   issn={0022-040X},
   review={\MR{2259056 (2007g:58024)}},
}
		
% \bib{HigsonRoe}{article}{
%     AUTHOR = {Higson, Nigel and Roe, John},
%      TITLE = {Amenable group actions and the {N}ovikov conjecture},
%    JOURNAL = {J. Reine Angew. Math.},
%   FJOURNAL = {Journal f\"{u}r die Reine und Angewandte Mathematik. [Crelle's
%               Journal]},
%     VOLUME = {519},
%       YEAR = {2000},
%      PAGES = {143--153},
%       ISSN = {0075-4102},
%    MRCLASS = {57R99 (19J25 46L87 54H11)},
%   MRNUMBER = {1739727},
% MRREVIEWER = {Erik K. Pedersen},
%        DOI = {10.1515/crll.2000.009},
%        URL = {https://doi.org/10.1515/crll.2000.009},
% }

\bib{HigsonSchickXie}{article}{
   author={Higson, Nigel},
   author={Schick, Thomas},
   author={Xie, Zhizhang},
   title={C$^*$--algebraic higher signatures and an invariance theorem in
   codimension two},
   journal={Geom. Topol.},
   volume={22},
   date={2018},
   number={6},
   pages={3671--3699},
   issn={1465-3060},
   review={\MR{3858772}},
   doi={10.2140/gt.2018.22.3671},
 }

% \bib{KirchbergWassermann}{article}{
%     AUTHOR = {Kirchberg, Eberhard and Wassermann, Simon},
%      TITLE = {Permanence properties of {$C^*$}-exact groups},
%    JOURNAL = {Doc. Math.},
%   FJOURNAL = {Documenta Mathematica},
%     VOLUME = {4},
%       YEAR = {1999},
%      PAGES = {513--558},
%       ISSN = {1431-0635},
%    MRCLASS = {46L05 (22D05 46L80)},
%   MRNUMBER = {1725812},
% MRREVIEWER = {Erik B\'{e}dos},
% }

 \bib{Kubota}{unpublished}{
   author={Kubota, Yosuke},
   title={The relative Mishchenko--Fomenko higher index and almost flat
     bundles II},
   note={arXiv:1908.10733},
   date={2019},
   }
\bib{NitscheSchickZeidler}{unpublished}{
  author={Nitsche, Martin},
  author={Schick, Thomas},
  author={Zeidler, Rudolf},
  title={Transfer maps in generalized group homology via submanifolds},
  note={arXiv:1906.01190},
  year={2019},
}

% \bib{Ozawa}{article}{
%     AUTHOR = {Ozawa, Narutaka},
%      TITLE = {Amenable actions and exactness for discrete groups},
%    JOURNAL = {C. R. Acad. Sci. Paris S\'{e}r. I Math.},
%   FJOURNAL = {Comptes Rendus de l'Acad\'{e}mie des Sciences. S\'{e}rie I.
%               Math\'{e}matique},
%     VOLUME = {330},
%       YEAR = {2000},
%     NUMBER = {8},
%      PAGES = {691--695},
%       ISSN = {0764-4442},
%    MRCLASS = {22D25 (43A35 46L06 46L55)},
%   MRNUMBER = {1763912},
% MRREVIEWER = {Alain Valette},
%        DOI = {10.1016/S0764-4442(00)00248-2},
%        URL = {https://doi.org/10.1016/S0764-4442(00)00248-2},
% }
\bib{Rosenberg}{article}{
   author={Rosenberg, Jonathan},
   title={$C^{\ast} $-algebras, positive scalar curvature, and the Novikov
   conjecture},
   journal={Inst. Hautes \'Etudes Sci. Publ. Math.},
   number={58},
   date={1983},
   pages={197--212 (1984)},
   issn={0073-8301},
   review={\MR{720934 (85g:58083)}},
}		

\bib{SchickICM}{article}{
   author={Schick, Thomas},
   title={The topology of positive scalar curvature},
   conference={
      title={Proceedings of the International Congress of
      Mathematicians---Seoul 2014. Vol. II},
   },
   book={
      publisher={Kyung Moon Sa, Seoul},
   },
   date={2014},
   pages={1285--1307},
   review={\MR{3728662}},
}

  \end{biblist}
\end{bibdiv}

\end{document}